\newtheorem{theorem}{Theorem}
\newtheorem{lemma}{Lemma}
\newtheorem{corollary}{Corollary}
\newtheorem{remark}{Remark}
\newcommand{\abs}[1]{\left\vert#1\right\vert}
\newcommand{\To}{\rightarrow}
\newcommand{\bsa}{\boldsymbol{a}}
\newcommand{\bsc}{\boldsymbol{c}}
\newcommand{\bsn}{\boldsymbol{n}}
\newcommand{\bsx}{\boldsymbol{x}}
\newcommand{\bsb}{\boldsymbol{b}}
\newcommand{\bsy}{\boldsymbol{y}}
\newcommand{\icomp}{\mathtt{i}}
\newcommand{\rd}{\,\mathrm{d}}
\newcommand{\NN}{\mathbb{N}}
\newcommand{\ZZ}{\mathbb{Z}}
\newcommand{\FF}{\mathbb{F}}
\newcommand{\RR}{{\mathbb R}}
\newcommand{\cP}{{\mathscr P}}
\newcommand{\ee}{\mathrm{e}}
\newcommand{\bsalpha}{\boldsymbol{\alpha}}
\newcommand{\rdots}{\mathinner{\mkern1mu\lower-1\p@\vbox{\kern7\p@\hbox{.}}
\mkern2mu \raise4\p@\hbox{.}\mkern2mu\raise7\p@\hbox{.}\mkern1mu}}
\begin{document}
\title{Discrepancy estimates for index-transformed uniformly distributed sequences}

\author{Peter Kritzer, 
Gerhard Larcher,   
Friedrich Pillichshammer\thanks{The authors gratefully acknowledge the support of the Austrian Science Fund (FWF), Projects P23389-N18 and F5506-N26 (Kritzer), 
P21943 and F5507-N26 (Larcher), S9609 and F5509-N26 (Pillichshammer). 
Project S9609 is part of the Austrian Research Network "Analytic Combinatorics and Probabilistic Number Theory". 
Projects F5506-N26, F5507-N26, F5509-N26 are parts of the special research program "Quasi-Monte Carlo Methods: Theory and Applications".}}

\date{}
\maketitle

\begin{abstract}
\noindent In this paper we show discrepancy bounds for index-transformed uniformly distributed sequences. 
From a general result we deduce very tight lower and upper bounds on the discrepancy of index-transformed van der Corput-, Halton-, and $(t,s)$-sequences indexed by the sum-of-digits function. We also analyze the 
discrepancy of sequences indexed by other functions, such as, e.g., $\lfloor n^{\alpha}\rfloor$ with $0 < \alpha < 1$.
\end{abstract}

\noindent\textbf{Keywords:} Discrepancy, uniform distribution, van der Corput-sequence, Halton-se\-quen\-ce, $(t,s)$-sequence, sum-of-digits function.\\

\noindent\textbf{2010 Mathematics Subject Classification:} 11K06, 11K31, 11K36, 11K38.\\

\section{Introduction}

A sequence $(\bsy_n)_{n\ge 0}$ in the unit-cube $[0,1)^s$ is said to be {\it uniformly distributed modulo one} if for all intervals $[\bsa,\bsb) \subseteq [0,1)^s$ it is true that
\begin{equation}\label{udm1}
\lim_{N \rightarrow \infty} \frac{\#\{n\, : \, 0 \le n <N, \bsy_n \in   [\bsa,\bsb)\}}{N}=\mathrm{vol} ([\bsa,\bsb)).
\end{equation}
A quantitative version of \eqref{udm1} can be stated in terms of discrepancy. For an infinite 
sequence $(\bsy_n)_{n\ge 0}$ in $[0,1)^s$ its {\it discrepancy} is 
defined as 
$$D_N((\bsy_n)_{n\ge 0}):=\sup_{[\bsa,\bsb)
\subseteq [0,1)^s}\left|\frac{\#\{n\, : \, 0 \le n <N, \bsy_n \in   [\bsa,\bsb)\}}{N}-\mathrm{vol} ([\bsa,\bsb))\right|,$$ 
where the supremum is extended over all sub-intervals $[\bsa,\bsb)$ of $[0,1)^s$. For a given finite sequence $X=(\bsx_1,\ldots,\bsx_M)$ we write
$D_M(X)$ for the discrepancy of $X$ with the obvious adaptions in the above definition. An infinite sequence is uniformly distributed 
modulo one if and only if its discrepancy tends to zero as $N$ goes to infinity. However, convergence of the discrepancy to zero 
cannot take place arbitrarily fast. It follows from a result of Roth \cite{roth1} that for any infinite sequence
$(\bsy_n)_{n\ge 0}$ in $[0,1)^s$ we have $N D_{N}((\bsy_n)_{n\ge 0})\ge c_s (\log N)^{s/2}$ for infinitely many values of 
$N \in \NN$ (by $\NN$ we denote the set of positive integers,
and we put $\NN_0:=\NN\cup\{0\}$). An improvement of this bound can be obtained from \cite{BLV08}. 
For the special case $s=1$, Schmidt~\cite{Schm72distrib} (see also \cite{bej}) 
showed that for any infinite sequence $(y_n)_{n\ge 0}$ in $[0,1)$ we have $N D_{N}((y_n)_{n\ge 0})\ge\frac{\log N}{66\log 4}$ for infinitely many values of 
$N \in \NN$. This result is best possible with respect to the order of magnitude in $N$. An excellent introduction to this topic can be found in the book of Kuipers and Niederreiter \cite{kuinie} (see also \cite{DP10,dt,matou,niesiam}). 

Well known examples of uniformly distributed sequences are $(n\bsalpha)$-sequences (also called Kronecker-sequences, see \cite{dt,kuinie}), van der Corput-sequences and their multivariate analogues called Halton-sequences (see \cite{DP10,huawang,kuinie,niesiam}), as well as (digital) $(t,s)$-sequences (see \cite{DP10,niesiam}). 

\bigskip

In recent years, also the distribution properties of index-transformed uniformly distributed sequences have been studied, especially for the examples mentioned above. In this paper, we mean by an index-transformed sequence of a sequence $(x_n)_{n \ge 0}$ a sequence $(x_{f(n)})_{n \ge 0}$, where $f:\NN_0 \rightarrow \NN_0$. Note that $(x_{f(n)})_{n \ge 0}$ is in general no subsequence of $(x_n)_{n \ge 0}$ since we do {\it not} require that $f$ is strictly increasing.

For instance, the distribution properties of index-transformed
Kronecker-sequences indexed by the sum-of-digits function were studied in \cite{C80,DL01,TTa,TT}. For this special case, very precise results 
can be found in \cite{DL01}. In \cite{D98} the well-distribution of index-transformed Kronecker-sequences indexed by $q$-additive functions is considered. 
Furthermore, in \cite{P12} a discrepancy bound for van der Corput-sequences in bases of the form $b=5^\ell$, $\ell\in\NN$, indexed by Fibonacci numbers is shown. 
The papers \cite{HN,HKLP,P12} deal with index-transformed van der Corput-, Halton-, and $(t,s)$-sequences. 

In this paper we are specifically interested in discrepancy bounds for sequences indexed by the $q$-ary sum-of-digits function and related functions 
and, furthermore, for sequences indexed by ``moderately'' monotonically increasing sequences, as for example $\lfloor n^{\alpha}\rfloor$ with $0 < \alpha < 1$. 
For an integer $q \ge 2$ and $n \in \NN_0$ with base $q$ expansion $n=r_0+r_1 q+r_2 q^2+\cdots$ the \textit{$q$-ary sum-of-digits function} is defined by $s_q(n):=r_0+r_1+r_2+\cdots$.

Previously, it has been shown in \cite{HKLP} that the sequence $(\bsx_{s_q(n)})_{n\ge 0}$, indexed by the $q$-ary sum-of-digits function, where $(\bsx_n)_{n \ge 0}$ denotes the Halton-sequence in 
co-prime bases $b_1,\ldots,b_s$ is uniformly distributed modulo one. The proof of this result is due to the fact that the sequence generated by the $q$-ary sum-of-digits function is 
uniformly distributed in $\ZZ$, see, for example, \cite{gel,PS02}. In this paper we provide very tight lower and upper bounds on the discrepancy of index-transformed van der Corput-, 
Halton-, and $(t,s)$-sequences indexed by the sum-of-digits function. 

\bigskip

This paper is structured as follows. In Section~\ref{secdef}, we provide basic definitions and notation used throughout the subsequent sections.
In Section~\ref{secgeneral}, we prove a general theorem (Theorem~\ref{thmgeneral}) which will be of great importance in discussing sequences indexed by the sum-of-digits function. In Section~\ref{secsod} 
we present a concrete application of Theorem~\ref{thmgeneral} which leads to the aforementioned tight 
bounds on the discrepancy of Halton- and $(t,s)$-sequences indexed by $s_q(n)$. 
Furthermore, we discuss a refinement of these results for van der Corput-sequences. 
Finally, in Section~\ref{secother}, we deal with discrepancy bounds for sequences which 
are obtained by certain moderately increasing index sequences, such as, e.g., $\lfloor n^{\alpha}\rfloor$ with $0 < \alpha < 1$.

\section{Notation and basic definitions}\label{secdef}

We first outline the definitions of the sequences studied in this paper, namely van der Corput-, Halton-, and $(t,s)$-sequences.

\bigskip

Let $b \ge 2$ be an integer. A {\it van der Corput-sequence $(x_n)_{n\ge 0}$ in base $b$} is defined by $x_n=\varphi_b(n)$,
where for $n \in \NN_0$, with base $b$ expansion $n=a_0+a_1b+a_2 b^2+\cdots$, the 
so-called {\it radical inverse function} $\varphi_b: \NN_0 \rightarrow [0,1)$ is 
defined by 
$$\varphi_b(n):=\frac{a_0}{b}+\frac{a_1}{b^2}+\frac{a_2}{b^3}+\cdots.$$ 
It is well known that for any base $b \ge 2$ the corresponding van der Corput-sequence is uniformly 
distributed modulo one and that $N D_N((x_n)_{n\ge 0}) = O(\log N)$, see, for example, \cite{befa,DP10,kuinie}. 

If we choose co-prime integers $b_1,\ldots,b_s\ge 2$, then $s$ one-dimensional van der Corput-sequences can be combined to an 
$s$-dimensional uniformly distributed 
sequence with points $\bsx_n:=(\varphi_{b_1}(n),\ldots,\varphi_{b_s}(n))$
for $n\in\NN_0$. This sequence is called a {\it Halton-sequence} 
and it is known that its discrepancy is of order $(\log N)^s /N$, see \cite{A04,DP10,fau80,FL,halton,huawang,meijer,niesiam}. Note 
that Halton-sequences are a
direct generalization of van der Corput-sequences, so van der Corput-sequences can be viewed as one-dimensional Halton-sequences, and indeed
Halton-sequences are sometimes also referred to as van der Corput-Halton-sequences (see, e.g., \cite{kuinie}). However, as there
will be results in this paper which only hold for the one-dimensional case, 
it will be useful to explicitly distinguish van der Corput-sequences (which we use for the
one-dimensional variant) from Halton-sequences (which we use for the multi-dimensional variant).

\bigskip

Another type of sequences we will be concerned with in this paper are $(t,s)$-sequences, 
for the definition of which we need the definition of elementary intervals and $(t,m,s)$-nets in base $b$. 

For an integer $b\ge2$, an {\it elementary interval} in base $b$
is an interval of the form $\prod_{i=1}^s [a_i b^{-d_i},(a_i +1) b^{-d_i})\subseteq [0,1)^s$, where $a_i, d_i$ are non-negative integers with
$0\le a_i< b^{d_i}$ for $1\le i\le s$. 

Let $t,m$, with $0\le t\le m$, be integers. Then a {\it $(t,m,s)$-net in base $b$} is a point set $(\bsy_n)_{n=0}^{b^m-1}$ in $[0,1)^s$ such that
any elementary interval in base $b$ of volume $b^{t-m}$ contains exactly $b^t$ of the $\bsy_n$. 

Furthermore, we call an infinite sequence $(\bsx_n)_{n\ge 0}$ a {\it $(t,s)$-sequence in base $b$} if the subsequence $(\bsx_n)_{n=kb^m}^{(k+1)b^m -1}$ is 
a $(t,m,s)$-net in base $b$ for all integers $k\ge 0$ and $m\ge t$. It is known (see, e.g., \cite{DP10, N87, niesiam}) 
that a $(t,s)$-sequence is particularly evenly distributed if the value of $t$ is small. In particular,
it can be shown that the discrepancy of a $(t,s)$-sequence in base $b$ is of order $b^t (\log N)^s /N$, see, e.g., \cite{DP10,N87, niesiam}.

\bigskip

A very important sub-class of $(t,s)$-sequences is that of digital $(t,s)$-sequences, which are defined over algebraic structures like finite fields or rings. 
For the sake of simplicity, we restrict ourselves to digital sequences over finite fields $\FF_p$ of prime order $p$. Again for the sake of simplicity we do not
distinguish, here and later on, between elements in $\FF_p$
and the set of integers $\{0,1,\ldots,p-1\}$ (equipped with
arithmetic operations modulo $p$). 

For a vector $\bsc=(c_1,c_2,\ldots)\in \FF_p^{\infty}$ and
for $m \in \NN$  we denote the vector in $\FF_p^m$
consisting of the first $m$ components of $\bsc$ by $\bsc(m)$,
i.e., $\bsc(m)=  (c_1,\ldots ,c_m)$. Moreover, for an $\NN
\times \NN$ matrix $C$ over $\FF_p$ and for $m \in
\NN$ we denote by $C(m)$ the left upper $m \times m$
submatrix of $C$.

For $s \in \NN$ and $t \in \NN_0$, choose $\NN
\times \NN$ matrices $C_1,\ldots,C_s$ over $\FF_p$ with
the following property. For every $m \in \NN$, $m \ge t$, and
all $d_1,\ldots,d_s \in \NN_0$ with $d_1+\cdots +d_s= m-t$,
the vectors
\[\bsc_1^{\,(1)}(m),\ldots,\bsc_{d_1}^{\,(1)}(m),\ldots,\bsc_1^{\,(s)}(m),\ldots,\bsc_{d_s}^{\,(s)}(m)\] are linearly independent in $\FF_p^m$. Here $\bsc_i^{\,(j)}$ is the $i$-th
row vector of the matrix $C_j$.

For $n \in \NN_0$ let $n=n_0+n_1 p+ n_2 p^2+\cdots$ be the base
$p$ representation of $n$. For every index $1 \le j \le s$
multiply the digit vector $\bsn=(n_0,n_1,\ldots)^{\top}$ by the
matrix $C_j$,
$$C_j \cdot\bsn=:(x_{n,j}(1),x_{n,j}(2),\ldots)^{\top}$$ (note that the matrix-vector multiplication is performed over $\FF_p$), and set
$$x_n^{(j)}:=\frac{x_{n,j}(1)}{p}+\frac{x_{n,j}(2)}{p^2}+\cdots.$$
Finally set $\bsx_n:=(x_n^{(1)},\ldots,x_n^{(s)})$. A sequence
$(\bsx_n)_{n\ge 0}$ constructed in this way is called a {\it
digital} $(t,s)$-{\it sequence over} $\FF_p$. The matrices
$C_1,\ldots ,C_s$ are called the {\it generator matrices} of the
sequence.

To guarantee that the points $\bsx_n$ lie in $[0,1)^s$ (and not
just in $[0,1]^s$) we assume that for each $1 \le j \le s$ and $w
\ge 0$ we have $c_{v,w}^{(j)}=0$ for all sufficiently large $v$,
where $c_{v,w}^{(j)}$ are the entries of the matrix $C_j$ (see \cite[p.72, condition (S6)]{niesiam} for more information).

\bigskip

Throughout the paper we use the following notation. For functions $f,g:\NN \rightarrow \RR$, where $f \ge 0$, we write $g(n)=O(f(n))$ or $g(n) \ll f(n)$, 
if there exists a $C>0$ such that $|g(n)| \le C f(n)$ for all sufficiently large $n \in \NN$. 
If we would like to stress that the quantity $C$ may also depend on other variables than $n$, say $\alpha_1,\ldots,\alpha_w$, 
which will be indicated by writing $\ll_{\alpha_1,\ldots,\alpha_w}$.

\section{A general theorem}\label{secgeneral}

In this section we present a general result for the discrepancy of sequences of the form $(\bsx_{g(n)})_{n \ge 0}$, for a particular class of functions $g: \NN_0 \rightarrow \NN_0$. 
Here and in the following, a sequence $(a_k)_{k\in \NN_0}$ is called {\it unimodal} if the sequence $(a_{k+1}-a_{k})_{k \in \NN_0}$ has exactly one change of sign.

Furthermore, we need the concept of the so-called \textit{uniform discrepancy} of a sequence. The uniform discrepancy of 
a sequence $(\bsx_n)_{n \ge 0}$ in $[0,1)^s$ is defined as 
$$\widetilde{D}_N((\bsx_n)_{n \ge 0}):= \sup_{k \in \NN_0} D_N((\bsx_{n+k})_{n \ge 0}).$$
 
\begin{theorem}\label{thmgeneral}
Let $(\bsx_n)_{n \ge 0}$ be an $s$-dimensional sequence with uniform discrepancy $\widetilde{D}_N=\widetilde{D}_N((\bsx_n)_{n \ge 0})$, 
and let $f:\NN_0\To\RR$ be a non-decreasing function such that $N\widetilde{D}_N\le f(N)$ for $N\in\NN_0$.

Let $g:\NN_0\rightarrow \NN_0$. Furthermore, let $(N_j)_{j \ge 0}$ be a strictly increasing sequence
in $\NN$ with $1=N_0$, and assume that $(N_j)_{j \ge 0}$ is a divisibility chain, i.e., $N_0| N_1$, $N_1 | N_2$, $N_2|N_3$, etc.  Define, for $k\in\NN_0$, 
$$G_{A,j}(k):=\#\{n \, : \,  A N_j \le n < (A+1)N_j, g(n)=k\}.$$
Then the following two assertions hold.
\begin{enumerate}
 \item For $N\in\NN$ with $N_d\le N< N_{d+1}$ we have $N D_N((\bsx_{g(n)})_{n \ge 0}) \ge \max_{k \in \NN_0} G_{0,d}(k)$.
 \item Assume that $G_{A,j}(k)$ is unimodal in $k$ for all $j\in \NN_0$ and all $A \in \NN_0$, and put 
$$G_j:=\max_{k,A \in \NN_0} G_{A,j}(k) \;\mbox{ for }\; j \in \NN_0.$$  

For $j \in \NN_0$ and $A \in \NN_0$ let $$v_{A,j}:=\#\{k\in\NN_0 \, : \, g(n)=k\mbox{ for } A N_j \le n < (A+1)N_j\}$$ and put $$v_j:=\max_{A \in \NN_0} v_{A,j}.$$

Then for $N \in \NN$ with $N_d \le N < N_{d+1}$ we have 
$$
N D_N((\bsx_{g(n)})_{n \ge 0}) \le \sum_{j=0}^d \frac{N_{j+1}}{N_j}  G_j  f(v_j).
$$ 
\end{enumerate}
\end{theorem}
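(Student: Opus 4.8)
The plan is to decompose the counting function of the index-transformed sequence over the dyadic-type blocks given by the divisibility chain $(N_j)$, and on each block replace the exact count by the discrepancy estimate of a suitable sub-block of the original sequence $(\bsx_n)_{n\ge 0}$. For part (1), fix an interval $J=[\bsa,\bsb)\subseteq[0,1)^s$ and $N$ with $N_d\le N<N_{d+1}$. Since $\bsx_{g(n)}$ depends only on the value $k=g(n)$, the points $\{\bsx_{g(n)} : 0\le n<N\}$ are obtained by taking each point $\bsx_k$ with multiplicity $\#\{n<N : g(n)=k\}\ge \#\{n : 0\le n<N_d,\ g(n)=k\}=G_{0,d}(k)$. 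Choosing $k^\ast$ attaining $\max_k G_{0,d}(k)$ and letting $J$ shrink to (a tiny neighbourhood of) the single point $\bsx_{k^\ast}$ — strictly, taking $J$ with volume $<1/N$ and containing $\bsx_{k^\ast}$ — the local discrepancy at $J$ is at least $(G_{0,d}(k^\ast) - N\,\mathrm{vol}(J))/N$, and letting $\mathrm{vol}(J)\to 0$ gives $N D_N \ge G_{0,d}(k^\ast)=\max_k G_{0,d}(k)$.

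For part (2), the plan is to build a telescoping partition of $\{0,1,\dots,N-1\}$ into intervals $[A N_j,(A+1)N_j)$ of geometrically increasing length. Concretely, since $N_d\le N<N_{d+1}$ and each $N_j$ divides $N_{j+1}$, one peels off full blocks of length $N_d$, then within the remainder full blocks of length $N_{d-1}$, and so on down to length $N_0=1$; this produces at most $N_{j+1}/N_j$ blocks of each length $N_j$ for $j=0,\dots,d$. For a single block $B=[A N_j,(A+1)N_j)$, the image $\{g(n) : n\in B\}$ is a set of at most $v_{A,j}\le v_j$ consecutive-or-not integers, but more importantly each value $k$ in it is hit $G_{A,j}(k)$ times. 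The key idea is: the contribution of $B$ to the counting function of $(\bsx_{g(n)})$, namely $\#\{n\in B : \bsx_{g(n)}\in J\}$, equals $\sum_{k} G_{A,j}(k)\,\mathbf{1}_{\bsx_k\in J}$, and by unimodality of $k\mapsto G_{A,j}(k)$ we can apply Abel summation (summation by parts) to write this weighted count as a nonnegative combination, with weights summing to $G_j=\max G_{A,j}(k)$, of ordinary counting functions $\#\{k\in I : \bsx_k\in J\}$ over initial/terminal segments $I$ of the value-range. Each such $\#\{k\in I : \bsx_k\in J\}$ differs from $|I|\,\mathrm{vol}(J)$ by at most $|I|\,\widetilde D_{|I|}\le f(|I|)\le f(v_j)$, using that $f$ is non-decreasing and $|I|\le v_j$, and that these are genuinely counts over a shifted window $(\bsx_{n+k_0})$ so that the uniform discrepancy is the right object. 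Summing the block discrepancies: the main term $\sum_B \#\{k\in\text{range}\}\,\mathrm{vol}(J)$ telescopes to $N\,\mathrm{vol}(J)$ (total multiplicity is $N$), and the error terms are bounded by $\sum_{j=0}^d (N_{j+1}/N_j)\, G_j\, f(v_j)$. Taking the supremum over $J$ yields the claim.

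The main obstacle I expect is the Abel-summation step exploiting unimodality: one must be careful that a unimodal weight sequence $(G_{A,j}(k))_k$ can be decomposed so that the weighted sum $\sum_k G_{A,j}(k)\,\mathbf{1}_{\bsx_k\in J}$ is controlled in terms of at most two "monotone pieces," and on each monotone piece summation by parts expresses the quantity as an integral of $\#\{k\le K : \bsx_k\in J\}$-type terms against a measure of total mass equal to the peak value $G_j$. Getting the constant exactly $G_j$ (rather than $2G_j$) requires noticing that the increasing part and the decreasing part share the peak and that the telescoped main terms from the two pieces combine correctly; this is the delicate bookkeeping. A secondary subtlety is ensuring the window over which each ordinary discrepancy is measured is of the form $(\bsx_{n+k_0})_{n\ge 0}$ restricted to $|I|$ terms, which is precisely why the hypothesis is stated with the \emph{uniform} discrepancy $\widetilde D_N$ and the bound $f$ rather than with $D_N$ alone.
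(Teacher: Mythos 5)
Your proposal is correct and follows essentially the same route as the paper: the lower bound via the most frequent value of $g$ on $[0,N_d)$ and a shrinking interval around $\bsx_{k^\ast}$, and the upper bound via the base-$(N_j)$ block decomposition of $\{0,\ldots,N-1\}$, unimodality, and the uniform discrepancy of windows of at most $v_j$ consecutive indices. The ``delicate bookkeeping'' you flag is resolved in the paper exactly as you suggest: instead of Abel summation on the two monotone pieces separately, one decomposes the block multiset into nested superlevel sets of the unimodal count (each an interval of consecutive indices containing the peak), so the weights telescope to the single peak value $\le G_j$ and the constant is $G_j$, not $2G_j$.
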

\begin{proof}
\begin{enumerate}

\item To show the lower bound choose a non-negative integer $\kappa$ such that  $\widetilde{G}_d=G_{0,d}(\kappa)=\max_{k \in \NN_0}G_{0,d}(k)$. 
Then the number of $n \in \{0,\ldots,N-1\}$ such that $\bsx_{g(n)}=\bsx_{\kappa}$ is at least $\widetilde{G}_d$ and hence, 
with an arbitrarily small interval containing $\bsx_{\kappa}$ we obtain $$D_N( (\bsx_{g(n)})_{n\ge 0}) \ge \frac{\widetilde{G}_d}{N}.$$

\item To prove the upper bound let 
$$N=a_d N_d+a_{d-1} N_{d-1}+\cdots+a_0 N_0,$$ 
with $a_j\in\NN_0$ and
$$a_j \le \frac{N_{j+1}}{N_j};\;\mbox{ for }\;\; j \in \{0,\ldots,d\}.$$ 
For $j \in \{0,\ldots,d\}$ and $\ell\in\{0,\,\ldots,a_j-1\}$  we consider the sequence 
$$X_{j,\ell}:=(\bsx_{g(A N_j+k)})_{k=0}^{N_j -1}$$ where $A N_j:= a_d N_d+\cdots+a_{j+1}N_{j+1}+ \ell N_j$ (strictly speaking, $A=A(j,\ell)$). 

Since $G_{A,j}$ is unimodal we may assume that for $A N_j \le n < (A+1)N_j$ the function $g(n)$ attains the values 
$$w,w+1,\ldots,w+v,$$ 
for some $w\in\NN_0$ and some integer $v=v_{A,j} \le v(j)$

Assume that the value $w+u_1$ with $0 \le u_1 \le v$ is attained most often, the value $w+u_2$ with $0 \le u_2 \le v$ 
is attained second most often, etc. \ldots, and $w+u_v$ with $0 \le u_v \le v$ (indeed, $u_v \in \{0,v\}$) is attained least often. 
If $w+u_r$ and $w+u_{r+1}$ are both attained the same number of times, then the order of them is of no relevance.

If we consider the sequence $X_{j,\ell}$ as a multi-set (i.e., multiplicity of the elements is relevant, but their order is not), then we can decompose
$X_{j,\ell}$ into
\begin{center}
\begin{tabular}{lll}
$G_{A,j}(w+u_1)-G_{A,j}(w+u_2)$ & times & $\{\bsx_{w+u_1}\}$\\
$G_{A,j}(w+u_2)-G_{A,j}(w+u_3)$ & times & $\{\bsx_{w+u_1},\bsx_{w+u_2}\}$\\
$G_{A,j}(w+u_3)-G_{A,j}(w+u_4)$ & times & $\{\bsx_{w+u_1},\bsx_{w+u_2},\bsx_{w+u_3}\}$\\
\ldots \\
$G_{A,j}(w+u_{v-1})-G_{A,j}(w+u_{v})$ & times & $\{\bsx_{w+u_1},\bsx_{w+u_2},\ldots,\bsx_{w+u_{v-1}}\}$\\
$G_{A,j}(w+u_v)-G_{A,j}(w+u_{v+1})$ & times & $\{\bsx_{w+u_1},\bsx_{w+u_2},\ldots,\bsx_{w+u_v}\},$
\end{tabular}
\end{center}
where we formally set $G_{A,j}(w+u_{v+1}):=0$.
Note that because of the unimodality of $G_{A,j}(k)$, for $r\in\{1,\ldots,v\}$, the sequence $\bsx_{w+u_1},\bsx_{w+u_2},\ldots,\bsx_{w+u_r}$ 
is a sequence of the form $\bsx_{B},\ldots,\bsx_{B+r-1}$  for some $B$.

Then, using the assumptions of the theorem and the triangle inequality for the discrepancy (see \cite[p.~115, Theorem~2.6]{kuinie}), 
we obtain 
\begin{eqnarray*}
\lefteqn{N_jD_{N_j}(X_{j,\ell}) \le}\\
&\le&\sum_{r=1}^v (G_{A,j}(w+u_r)-G_{A,j}(w+u_{r+1})) r D_r (\{\bsx_{w+u_1},\bsx_{w+u_2},\ldots,\bsx_{w+u_r}\})\\ 
&\le& G_{A,j}(w+u_1)  f(v_{A,j})\\ 
&\le& G_j  f(v_j)
.
\end{eqnarray*} 
Using the triangle inequality for the 
discrepancy a second time, we finally obtain 
$$N D_N((\bsx_{g(n)})_{n\ge 0}) \le \sum_{j=0}^{d} a_j G_j f(v_j) 
 \le \sum_{j=0}^d \frac{N_{j+1}}{N_j} G_j  f(v_j).$$
\end{enumerate}
\end{proof}

\section{Indexing by the $q$-ary sum-of-digits function}\label{secsod}

We would now like to show results regarding index-transformed uniformly distributed sequences indexed 
by the $q$-ary sum-of-digits function. We first discuss an application of the general result in 
Theorem~\ref{thmgeneral} (Section~\ref{secsodsdim}) to Halton- and $(t,s)$-sequences, and then show a refined result that applies to 
the particular case of van der Corput-sequences (Section~\ref{secsod1dim}).

\subsection{Results for Halton- and $(t,s)$-sequences}\label{secsodsdim}

Let $q \ge 2$ be an integer and $g(n)=s_q(n)$ the $q$-ary sum-of-digits function. 
For $j \in \NN_0$ choose $N_j=q^j$. Then we have 
$$G_{0,j}(k)=\#\{n\, :\, 0 \le n < q^j, s_q(n)=k\}$$ 
and 
$$(1+x+x^2+\cdots+x^{q-1})^j=\sum_{k \in \NN_0} G_{0,j}(k) x^k,$$
by expanding the polynomial on the left hand side of the latter equation. Hence the sequence 
$(G_{0,j}(k))_{k \in \NN_0}$ is the $j$-fold convolution of the sequence $(\underbrace{1,1,\ldots,1}_{q-\mbox{{\tiny times}}},0,0,\ldots)$, which
implies by \cite[Theorem~1]{OR} that $G_{0,j}(k)$ is unimodal for sufficiently large $j$. 
Since any $n \in \NN_0$ with $A q^j \le n < (A+1)q^j$ 
can be written as $n=n'+A q^j$, where $0 \le n' < q^j$, it follows that $s_q(n)=s_q(n')+s_q(A)$ and hence $G_{A,j}(k)=G_{0,j}(k-s_q(A))$,
where we set $G_{0,j}(k-s_q(A)):=0$ if $k<s_q (A)$. Consequently, $G_{A,j}(k)$ is unimodal for any $A \in \NN_0$ and for sufficiently large $j$.

We recall the following lemma from \cite{DL01}. 

\begin{lemma}[Drmota and Larcher, {\cite[Lemma~1]{DL01}}]\label{lemdrmlar} 
For integers $q \ge 2$, $j \ge 1$, and $0 \le k \le j(q-1)$ 
we have 
$$G_{0,j}(k)=\frac{q^j}{\sqrt{2 \pi j} \sigma_q} 
\exp\left(-\frac{x_{j,k}^2}{2}\right)\left(1+\frac{P_1(x_{j,k})}{\sqrt{j}}
+\frac{P_2(x_{j,k})}{j}\right)+O\left(\frac{q^j}{j^2}\right),$$ 
where $P_1(x)$ and $P_2(x)$ are polynomials, $P_1(x)$ is odd, 
where $x_{j,k}:=\frac{k-\frac{j(q-1)}{2}}{\sigma_q \sqrt{j}}$, 
and where $\sigma_q:=\sqrt{\frac{q^2-1}{12}}$. 
The implied constant in the $O$-notation is uniform for all $k$ and only depends on $q$. 
\end{lemma}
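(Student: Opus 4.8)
The identity $(1+x+x^2+\cdots+x^{q-1})^j=\sum_{k}G_{0,j}(k)x^k$ established above shows that
\[
G_{0,j}(k)=q^j\,\Pr[S_j=k],\qquad S_j:=X_1+\cdots+X_j,
\]
where $X_1,\ldots,X_j$ are independent and uniformly distributed on $\{0,1,\ldots,q-1\}$. The lemma is thus a \emph{local central limit theorem with an Edgeworth correction}: one needs a two-term asymptotic expansion of $\Pr[S_j=k]$ with remainder of order $j^{-2}$, \emph{uniform} in $k$. The summands are i.i.d., bounded, and their differences generate $\ZZ$ (they include $1$), so such an expansion follows in principle from classical work on local limit theorems with Edgeworth expansions for lattice sums; I would nevertheless run the Fourier argument by hand, since it delivers the explicit polynomials and the required uniformity in one go.

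First I would pass to the characteristic function, centering each digit about its mean $\mu_q:=(q-1)/2$. The characteristic function of $X_i-\mu_q$ is
\[
\chi(t):=\mathrm{e}^{-\mathrm{i}\mu_q t}\,\frac1q\sum_{a=0}^{q-1}\mathrm{e}^{\mathrm{i}at}=\frac{\sin(qt/2)}{q\sin(t/2)},
\]
which is real, even, equals $1$ at $t=0$, and satisfies $\abs{\chi(t)}<1$ on $[-\pi,\pi]\setminus\{0\}$. Fourier inversion on $\ZZ$ gives, with $m:=k-j\mu_q$,
\[
\Pr[S_j=k]=\frac{1}{2\pi}\int_{-\pi}^{\pi}\chi(t)^j\,\mathrm{e}^{-\mathrm{i}mt}\rd t=\frac{1}{\pi}\int_{0}^{\pi}\chi(t)^j\cos(mt)\rd t,
\]
the last equality because $\chi$ is real and even. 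Fixing a small $\delta>0$ and putting $\rho:=\sup_{\delta\le\abs t\le\pi}\abs{\chi(t)}<1$, the range $\abs t\ge\delta$ contributes $O(\rho^j)$, which after multiplication by $q^j$ is $o(q^j j^{-N})$ for every $N$ and hence is absorbed in the error term.

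On $\abs t<\delta$ I would substitute $t=u/(\sigma_q\sqrt j)$, so that $mt=x_{j,k}u$, and split the integral into the central window $\abs u\le j^{1/6}$ and the annulus $j^{1/6}\le\abs u\le\delta\sigma_q\sqrt j$. On the annulus the bound $\abs{\chi(t)}\le\mathrm{e}^{-c't^2}$ (valid for $\abs t<\delta$ with $\delta$ small) gives $\abs{\chi(t)^j}\le\mathrm{e}^{-c'u^2/\sigma_q^2}$, so this part contributes $O(\mathrm{e}^{-cj^{1/3}})$. On the central window one uses the expansion $\log\chi(t)=-\tfrac12\sigma_q^2 t^2+\gamma_4 t^4+\gamma_6 t^6+\cdots$ (only even powers, since $\chi$ is even; $\gamma_{2i}$ is proportional to the $(2i)$-th cumulant of a digit, so depends only on $q$), whence $j\log\chi(u/(\sigma_q\sqrt j))=-\tfrac12 u^2+\gamma_4\sigma_q^{-4}u^4/j+O(u^6/j^2)$ with all higher terms genuinely $o(1)$ there, and therefore
\[
\chi\!\left(\frac{u}{\sigma_q\sqrt j}\right)^{\!j}=\mathrm{e}^{-u^2/2}\left(1+\frac{\gamma_4}{\sigma_q^4}\cdot\frac{u^4}{j}\right)+O\!\left(\frac{u^6+u^8}{j^2}\,\mathrm{e}^{-u^2/2}\right)
\]
on that window. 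Re-extending the integral to all of $\RR$ (cost $O(\mathrm{e}^{-cj^{1/3}})$) and evaluating the Gaussian moment integrals $\frac1\pi\int_0^\infty\mathrm{e}^{-u^2/2}u^{2\ell}\cos(xu)\rd u$, each of which equals $\mathrm{e}^{-x^2/2}$ times a (Hermite) polynomial in $x$, I would arrive at
\[
\Pr[S_j=k]=\frac{1}{\sqrt{2\pi j}\,\sigma_q}\,\mathrm{e}^{-x_{j,k}^2/2}\left(1+\frac{P_2(x_{j,k})}{j}\right)+O\!\left(\frac{1}{j^2}\right),
\]
with $P_2(x)=\dfrac{\gamma_4}{\sigma_q^4}\,(x^4-6x^2+3)$ an even polynomial of degree $4$; multiplying by $q^j$ yields the lemma. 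Because the digit distribution is symmetric about $\mu_q$, all odd cumulants vanish, so the $j^{-1/2}$-term is absent and one may take $P_1\equiv 0$, which is indeed an odd polynomial as the statement permits.

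The step I expect to be the main obstacle is the uniform control of the error terms. One cannot Taylor-expand on the full window $\abs u\le\delta\sigma_q\sqrt j$, because there even the leading correction $\gamma_4\sigma_q^{-4}u^4/j$ fails to be small; this forces the two-scale decomposition above, with the analysis confined to $\abs u\le j^{1/6}$ and the complement handled by the Gaussian tail bound $\abs{\chi(t)}\le\mathrm{e}^{-c't^2}$. Within the central window one needs explicit bounds on the higher derivatives of $\log\chi$ near $0$ (equivalently, on the higher cumulants of a digit) to guarantee that the remainder, once integrated against $\mathrm{e}^{-u^2/2}$, is $O(j^{-2})$. Finally, the uniformity in $k$ — the assertion that the implied constant depends only on $q$ — holds because $k$ enters only through the factor $\cos(x_{j,k}u)$ (of modulus $\le1$) inside the integrals and the factor $\mathrm{e}^{-x_{j,k}^2/2}$ in the main terms; the bound $O(q^j/j^2)$ therefore persists across the entire range $0\le k\le j(q-1)$, including the extreme values where $\abs{x_{j,k}}$ is of order $\sqrt j$ and the Gaussian main term is itself exponentially small (so the formula, while consistent, is no longer sharp there).
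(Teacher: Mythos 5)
Your proof is correct: the paper itself gives no argument for this lemma (it is quoted from Drmota and Larcher \cite{DL01}), and your Fourier-inversion/local central limit theorem computation --- with the two-scale treatment of the central window, the Gaussian tail bound for $|\chi(t)|$ near $0$, and the uniform $O(\rho^j)$ bound away from $0$ --- is essentially the standard characteristic-function (saddle-point) argument underlying the cited result, and it does deliver the claimed uniformity in $k$ and the $O(q^j/j^2)$ remainder. Your additional observation that the symmetry of the digit distribution about $(q-1)/2$ kills all odd cumulants, so that one may take $P_1\equiv 0$ (the zero polynomial being odd), is consistent with the statement as quoted.
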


Due to Lemma \ref{lemdrmlar}, there exists some $c_q>0$ such that for sufficiently large $j$ 
we have $G_{A,j}(k) \le c_q q^j/\sqrt{j}$, 
uniformly in $k$ and $A$. Thus we obtain 
\begin{equation}\label{ubsq}
G_j \le c_q \frac{q^j}{\sqrt{j}}
\end{equation}
for sufficiently large $j$.
On the other hand, for $\widetilde{k}=\left\lfloor j\frac{q-1}{2}\right\rfloor$ it follows that 
\begin{equation}\label{lbsq}
\max_{k \in \NN_0} G_{0,j}(k) \ge G_{0,j}(\widetilde{k}) \ge c_q' \frac{q^j}{\sqrt{j}}.
\end{equation}

Furthermore it is clear that $v_0=1$ and $v_j \le q j$ for all $j\in \NN$. 
As an application of Theorem~\ref{thmgeneral}, we obtain the following result.

\begin{theorem}\label{thmsodgen}
Let $X:=(\bsx_{n})_{n \ge 0}$ be an $s$-dimensional sequence such that $ m \widetilde{D}_{m}((\bsx_n)_{n\ge 0})
 \le C (\log m)^s$ for all $m \in \NN$, 
where $C$ may depend on $s$ or on the sequence $X$, but not on $m$. Let $q \ge 2$ be an integer. 
Then there exist $c_q^{(2)},c_q^{(3)} >0$, where $c_q^{(3)}$ may also depend on $s$ and 
$X$, such that  $$\frac{c_q^{(2)}}{\sqrt{\log N}} \le D_N((\bsx_{s_q(n)})_{n\ge 0}) 
\le c_q^{(3)}\frac{(\log \log N)^s}{\sqrt{\log N}}.$$
\end{theorem}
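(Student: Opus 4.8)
The plan is to apply Theorem~\ref{thmgeneral} with the choice $N_j=q^j$, using the estimates on $G_j$, $v_j$, and $G_{0,d}(k)$ already collected above. For the lower bound, part (1) of Theorem~\ref{thmgeneral} gives $N D_N((\bsx_{s_q(n)})_{n\ge 0})\ge \max_{k\in\NN_0}G_{0,d}(k)$ whenever $q^d\le N<q^{d+1}$. By \eqref{lbsq} this is at least $c_q' q^d/\sqrt{d}$, and since $q^d>N/q$ and $d\le \log N/\log q$ we get $N D_N((\bsx_{s_q(n)})_{n\ge 0})\gg_q N/\sqrt{\log N}$, i.e.\ $D_N((\bsx_{s_q(n)})_{n\ge 0})\ge c_q^{(2)}/\sqrt{\log N}$ for a suitable constant $c_q^{(2)}>0$; for the finitely many small $N$ where the asymptotic estimates fail one adjusts the constant, using that the discrepancy of any sequence is bounded below by $1/N$ times a positive quantity.

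For the upper bound, I would invoke part (2) of Theorem~\ref{thmgeneral}: since $G_{A,j}(k)=G_{0,j}(k-s_q(A))$ is unimodal in $k$ for all sufficiently large $j$ and for all $A$, and since $v_j\le qj$, we have with $f(m)=C(\log m)^s$ the bound
\begin{equation*}
N D_N((\bsx_{s_q(n)})_{n\ge 0})\le \sum_{j=0}^d \frac{N_{j+1}}{N_j}G_j f(v_j)= q\sum_{j=0}^d G_j\, C(\log v_j)^s.
\end{equation*}
Plugging in \eqref{ubsq}, i.e.\ $G_j\le c_q q^j/\sqrt{j}$, and $\log v_j\le \log(qj)\ll_q \log j$ for $j\ge 2$, the sum is $\ll_{q,s} \sum_{j=2}^d \frac{q^j}{\sqrt{j}}(\log j)^s$, where the finitely many initial terms (those $j$ for which unimodality or the $G_j$-estimate is not yet available) contribute only a bounded additive amount that is absorbed into the constant. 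Because $q^j/\sqrt{j}$ grows geometrically, the last term $j=d$ dominates: $\sum_{j=2}^d \frac{q^j}{\sqrt{j}}(\log j)^s \ll_q \frac{q^d}{\sqrt{d}}(\log d)^s$. Since $q^d\le N$ and $d\asymp_q \log N$ (so $\log d\asymp \log\log N$), this yields $N D_N((\bsx_{s_q(n)})_{n\ge 0})\ll_{q,s,X} N\,(\log\log N)^s/\sqrt{\log N}$, which is the claimed upper bound with an appropriate $c_q^{(3)}$.

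The only genuinely delicate point is handling the small-index terms cleanly: Theorem~\ref{thmgeneral}(2) requires $G_{A,j}(k)$ to be unimodal for \emph{all} $j$, whereas \cite{OR} and Lemma~\ref{lemdrmlar} only guarantee this for $j$ large and give the estimate $G_j\le c_q q^j/\sqrt j$ only for large $j$. I would deal with this by splitting the sum at a fixed threshold $j_0=j_0(q)$ beyond which both properties hold; for $j<j_0$ one uses the trivial bounds $G_j\le N_j=q^j$ and $v_j\le qj_0$, and applies Theorem~\ref{thmgeneral} with a modified divisibility chain starting from $N_{j_0}$ (or simply notes that finitely many extra terms, each bounded, enlarge the constant only). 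Everything else is a routine geometric-series estimate together with the elementary relations between $d$ and $\log N$, so no further obstacle is anticipated.
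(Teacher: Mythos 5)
Your proposal is correct and follows essentially the same route as the paper: lower bound from Theorem~\ref{thmgeneral}(1) combined with \eqref{lbsq}, and upper bound from Theorem~\ref{thmgeneral}(2) with $f(m)=C(\log m)^s$, $G_j\ll_q q^j/\sqrt{j}$ and $v_j\le qj$, the only cosmetic difference being that the paper estimates the resulting sum by splitting it at $j=d/2$ while you use geometric domination by the last term. Your explicit treatment of the finitely many small $j$ (where unimodality and the estimate on $G_j$ are not yet available) is in fact slightly more careful than the paper, which absorbs this silently into the constants.
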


\begin{proof}
Assume that $q^d \le N < q^{d+1}$. Then we obtain from Theorem~\ref{thmgeneral} and 
Equation \eqref{lbsq} that   
$$D_N((\bsx_{s_q(n)})_{n\ge 0}) \ge \frac{c'_q}{N}\frac{q^d}{\sqrt{d}} \ge \frac{c_q^{(2)}}{\sqrt{\log N}}.$$ 
On the other hand, from Theorem~\ref{thmgeneral} and Equation \eqref{ubsq} ,
\begin{eqnarray*}
D_N((\bsx_{s_q(n)})_{n\ge 0}) & \le &  \frac{1}{N} \sum_{j=1}^d q c_q \frac{q^j}{\sqrt{j}} C (\log(q j))^s \\
& \ll_q & (\log d)^s \left(\frac{1}{N} \sum_{1 \le j < d/2} \frac{q^j}{\sqrt{j}}+\frac{1}{N}\sum_{d/2 \le j \le d} \frac{q^j}{\sqrt{j}}\right)\\
& \ll_q & (\log d)^s \left(\frac{\sqrt{\log N}}{\sqrt{N}}+\frac{1}{\sqrt{d}}\right)\\
& \ll_q & \frac{(\log \log N)^s}{\sqrt{\log N}},
\end{eqnarray*}
and the result follows.
\end{proof}

The general lower bound in Theorem \ref{thmsodgen} is best possible with respect to the order of magnitude in $N$. 
This will follow from Theorem~\ref{thmvdc} 
below which deals with van der Corput-sequences.

There are several examples of sequences $X$ which satisfy the conditions in Theorem~\ref{thmsodgen} 
such as Halton- or $(t,s)$-sequences (for a proof of this fact, we refer to Section \ref{appendix} of this paper). We thus obtain the following corollary.

\begin{corollary}\label{cor1}
Let $q \ge 2$ be an integer.
\begin{enumerate}
 \item Let $(\bsx_{n})_{n \ge 0}$ be an 
$s$-dimensional Halton-sequence in pairwise co-prime bases\linebreak $b_1,\ldots,b_s$. 
Then there exist $c^{(2)}_q,c^{(4)}_{q,s,b_1,\ldots,b_s} >0$ such that  
$$\frac{c^{(2)}_q}{\sqrt{\log N}} \le D_N((\bsx_{s_q(n)})_{n=0}^{N-1}) \le c^{(4)}_{q,s,b_1,\ldots,b_s}\frac{(\log \log N)^s}{\sqrt{\log N}}.$$
 \item Let $(\bsx_{n})_{n \ge 0}$ be a $(t,s)$-sequence in base $b$. Then there exist $c^{(2)}_q,c^{(5)}_{q,b,s,t} >0$ 
such that  $$\frac{c^{(2)}_q}{\sqrt{\log N}} \le D_N((\bsx_{s_q(n)})_{n\ge 0}) \le c^{(5)}_{q,b,s,t}\frac{(\log \log N)^s}{\sqrt{\log N}}.$$
\end{enumerate}
\end{corollary}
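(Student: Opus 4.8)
The plan is to obtain Corollary~\ref{cor1} as a direct consequence of Theorem~\ref{thmsodgen}. The only thing that needs to be checked is that Halton-sequences and $(t,s)$-sequences satisfy the hypothesis of Theorem~\ref{thmsodgen}, namely that their \emph{uniform} discrepancy obeys $m\widetilde{D}_m((\bsx_n)_{n\ge 0})\le C(\log m)^s$ for all $m\in\NN$, with $C$ independent of $m$. Once this is established, the lower bound $c_q^{(2)}/\sqrt{\log N}$ carries over verbatim (it does not even use the hypothesis on $X$, only that $\bsx_{s_q(n)}$ repeats values often, via \eqref{lbsq}), and the upper bound becomes $c_q^{(3)}(\log\log N)^s/\sqrt{\log N}$ with $c_q^{(3)}$ absorbing the constant $C$, which in turn depends on the base(s) and on $s$ (and on $t$ in the digital case). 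So the two displayed chains of inequalities in Corollary~\ref{cor1} are just Theorem~\ref{thmsodgen} with the constant renamed to record these dependences.

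The substantive step is therefore the uniform-discrepancy estimate, which I would relegate to the appendix (Section~\ref{appendix}) as the paper already announces. The key observation is that the classical bound $ND_N=O((\log N)^s)$ for Halton- and $(t,s)$-sequences is in fact \emph{translation-invariant in the starting index}: what really gets used in the standard proofs (van der Corput / Niederreiter estimates for the local discrepancy of nets, or the Halton zero-one law argument) is that every block $(\bsx_{n})_{n=kb^m}^{(k+1)b^m-1}$ is a $(t,m,s)$-net (for the digital / $(t,s)$ case) and that shifting the window by $k$ only splits the counting interval $[0,N)$ into $O(\log N)$ elementary-interval blocks, each of which is still a net. Concretely, for a $(t,s)$-sequence in base $b$ one writes $k$ and $k+N$ in base $b$ and decomposes the index range $[k,k+N)$ into at most $2(\lfloor\log_b(k+N)\rfloor+1)$ intervals of the form $[ab^m,(a+1)b^m)$; on each such interval the points form a $(t,m,s)$-net, whose discrepancy is $\le C_{b,s,t}\,b^{t}\,m^{s}/b^m$ by the Niederreiter net bound; summing via the triangle inequality for discrepancy (the same inequality \cite[p.~115, Theorem~2.6]{kuinie} used in the proof of Theorem~\ref{thmgeneral}) yields $ND_N((\bsx_{n+k})_{n\ge 0})\le C_{b,s,t}(\log N)^s$ uniformly in $k$, hence $m\widetilde{D}_m\le C(\log m)^s$. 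For the Halton-sequence one argues analogously, either coordinatewise (a shifted one-dimensional van der Corput-sequence still has $ND_N=O(\log N)$ uniformly in the shift, by the same digit-expansion decomposition in base $b_i$) and then combining coordinates, or by invoking the known fact that Halton-sequences are $(0,s)$-sequences in a suitable (non-prime-power, but still net-structured) sense; I would give the coordinatewise argument since it is elementary and self-contained.

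The main obstacle is purely bookkeeping: making the $O((\log N)^s)$ net-discrepancy bound genuinely uniform over the shift parameter $k$, i.e.\ checking that none of the implied constants in the Niederreiter-type estimates secretly depend on where the block starts. This is true — the net structure is homogeneous along a $(t,s)$-sequence — but it requires stating the decomposition of $[k,k+N)$ into net-blocks carefully and tracking that the number of blocks is $O(\log N)$ with an absolute (base-dependent) constant. No new idea is needed beyond the standard van der Corput / Niederreiter toolkit; the work is to record it in a shift-robust form. I would therefore structure Section~\ref{appendix} as: (i) a lemma stating that a shifted $(t,m,s)$-net-block has discrepancy $\ll_{b,s,t} b^t m^s/b^m$; (ii) the decomposition of an arbitrary index window $[k,k+N)$ into $O(\log N)$ such blocks; (iii) summation by the triangle inequality to get the uniform-discrepancy bound; (iv) the analogous coordinatewise statement for Halton-sequences; and then Corollary~\ref{cor1} follows by feeding these bounds into Theorem~\ref{thmsodgen}.
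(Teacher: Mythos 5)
Your overall route coincides with the paper's: Corollary~\ref{cor1} is obtained by feeding shift-uniform discrepancy bounds for Halton- and $(t,s)$-sequences into Theorem~\ref{thmsodgen}, and for $(t,s)$-sequences your decomposition of the window $[k,k+N)$ into $(t,m,s)$-net blocks, each estimated by the standard net bound and summed via the triangle inequality for the discrepancy, is exactly the paper's Theorem~\ref{thmA1}. (One small slip there: the number of blocks should be stated as $O_{b}(\log N)$, not $O(\log(k+N))$; since any admissible block $[ab^m,(a+1)b^m)\subseteq[k,k+N)$ forces $b^m\le N$, the exponents are capped by $\log_b N$ and at most $2(b-1)$ blocks of each size occur, so this is indeed only the bookkeeping you acknowledge.)

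The genuine gap is in the Halton half. You propose to prove the uniform bound ``coordinatewise \dots and then combining coordinates,'' i.e.\ from the shift-uniform one-dimensional estimate for each van der Corput coordinate. That step fails: the $s$-dimensional discrepancy of a sequence is not controlled by the discrepancies of its one-dimensional coordinate projections (the diagonal sequence $(x_n,x_n)$ has perfect projections but discrepancy bounded away from zero), and the coprimality of $b_1,\ldots,b_s$ never enters your argument, although it must. Your fallback --- viewing Halton-sequences as $(0,s)$-sequences ``in a suitable sense'' --- is also not available off the shelf, since they are not $(t,s)$-sequences in a single base, and the corresponding mixed-base net bounds would themselves need proof. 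The correct (and in fact easy) shift-uniform argument is the one the paper uses: run the standard Halton discrepancy proof, which decomposes a test box into product elementary intervals and counts, via the Chinese Remainder Theorem, the indices $n$ with $n\equiv R\pmod{Q}$, $Q=\prod_{i=1}^{s} b_i^{j_i}$; the number of such $n$ in any window of length $N$ is $N/Q+O(1)$ irrespective of where the window starts. This is precisely the observation that \cite[Lemma~3.37]{DP10} remains valid for the shifted counting function $A(J,k,N,\mathcal{S})$, after which the rest of the proof of \cite[Theorem~3.36]{DP10} goes through unchanged.
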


The result of the first part of Corollary~\ref{cor1} can be improved for the special instance of
van der Corput-sequences, as we will show next.

\subsection{The van der Corput-sequence indexed by the sum-of-digits function}\label{secsod1dim}

The following results are based on a general discrepancy estimate 
which was first presented by Hellekalek \cite{hel2009}. 
The following definitions stem from \cite{hel2009,hel2010,HN}. We refer to these references for further information.

For an integer $b \ge 2$ let $\ZZ_b=\left\{z=\sum_{r=0}^\infty z_r b^r\, : \, z_r \in \{0,\ldots,b-1\}\right\}$ be the set 
of $b$-adic numbers. $\ZZ_b$ forms an abelian group under addition. The set $\NN_0$ is a subset of 
$\mathbb{Z}_b$. The {\it Monna map} $\phi_b:\mathbb{Z}_b \to [0,1)$ is defined by
\begin{equation*}
\phi_b(z) := \sum_{r=0}^\infty \frac{z_r}{b^{r+1}}.
\end{equation*}
Note that the radical inverse function 
$\varphi_b$ is nothing but $\phi_b$ restricted to $\NN_0$. We also define the inverse $\phi_b^+: [0,1)\to \mathbb{Z}_b$ by
\begin{equation*}
\phi_b^+\left(\sum_{r=0}^\infty \frac{x_r}{b^{r+1}}\right):=\sum_{r=0}^\infty x_r b^r,
\end{equation*}
where we always use the finite $b$-adic representation for $b$-adic rationals in $[0,1)$.

For $k \in \mathbb{N}_0$ we can define characters $\chi_k:\mathbb{Z}_b \to \{c \in \mathbb{C}: |c| = 1\}$ of $\mathbb{Z}_b$ by
\begin{equation*}
\chi_k(z) = \exp(2\pi \mathrm{i} \phi_b(k) z).
\end{equation*}
Finally, let $\gamma_k: [0,1) \to \{c \in \mathbb{C}: |c| = 1\}$ where $\gamma_k(x) = \chi_k(\phi_b^+(x))$.

For $b\ge 2$ we put $\rho_b(0)=1$ and $\rho_b(k)=\frac{2}{b^{r+1} \sin(\pi \kappa_r/b)}$ for $k \in \NN$ 
with base $b$ expansion $k=\kappa_0+\kappa_1 b+\cdots +\kappa_r b^r$, $\kappa_r \not=0$.

We have the following general discrepancy bound which is based on the functions $\gamma_k$.
\begin{lemma}\label{lem_hel_gen}
Let $g\in \NN$. For any sequence $(y_n)_{n\ge 0}$ in $[0,1)$ we have
\begin{eqnarray*}
 D_N((y_n)_{n\ge 0}) \le \frac{1}{b^g}+\sum_{k=1}^{b^g -1} \rho_b(k) \left|\frac{1}{N}\sum_{n=0}^{N-1}\gamma_{k}(y_n)\right|.
\end{eqnarray*}
\end{lemma}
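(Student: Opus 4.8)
The plan is to derive this bound from the classical Erdős--Turán--Koksma type inequality in the $b$-adic setting, following the approach of Hellekalek \cite{hel2009,hel2010,HN}. The starting point is the standard observation that the discrepancy can be controlled by evaluating the indicator functions of intervals on the $b$-adic "elementary box" system: for each $g\in\NN$, every interval $[0,x)\subseteq[0,1)$ differs from a union of $b$-adic elementary intervals of length $b^{-g}$ (i.e.\ intervals of the form $[ab^{-g},(a+1)b^{-g})$) by an error of at most $b^{-g}$ in measure. Hence, writing the local discrepancy as a sum over an indicator of such a union plus a remainder, one reduces the estimation of $D_N$ to estimating $\bigl|\frac1N\sum_{n=0}^{N-1}\mathbf 1_E(y_n)-\lambda(E)\bigr|$ where $E$ ranges over finite unions of $b$-adic elementary intervals of length $b^{-g}$, at the cost of the additive $b^{-g}$ term.

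The second step is to Fourier-expand the indicator of a $b$-adic elementary interval in terms of the characters $\gamma_k$. Since the functions $\{\gamma_k : 0\le k<b^g\}$ form an orthonormal basis for the space of functions on $[0,1)$ that are constant on each $b$-adic interval of length $b^{-g}$ (this is precisely the group-theoretic content: $\gamma_k=\chi_k\circ\phi_b^+$ and the $\chi_k$ for $0\le k<b^g$ exhaust the characters of the finite group $\ZZ_b/b^g\ZZ_b$), one writes $\mathbf 1_E=\sum_{k=0}^{b^g-1}\widehat{\mathbf 1_E}(k)\gamma_k$, where $\widehat{\mathbf 1_E}(0)=\lambda(E)$. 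Therefore
$$
\Bigl|\frac1N\sum_{n=0}^{N-1}\mathbf 1_E(y_n)-\lambda(E)\Bigr|
\le \sum_{k=1}^{b^g-1}\bigl|\widehat{\mathbf 1_E}(k)\bigr|\,\Bigl|\frac1N\sum_{n=0}^{N-1}\gamma_k(y_n)\Bigr|.
$$
It then remains to bound $|\widehat{\mathbf 1_E}(k)|$ uniformly over all admissible sets $E$ by $\rho_b(k)$. For a single elementary interval $I$ of length $b^{-g}$ one computes the character integral explicitly as a geometric-type sum and finds $|\widehat{\mathbf 1_I}(k)|\le \frac{1}{b^g}\cdot\frac{1}{b^{r}\sin(\pi\kappa_r/b)}$-type expressions; summing over the intervals constituting $E$, the "phases" telescope so that the worst case over all unions $E$ is governed exactly by $\rho_b(k)=\frac{2}{b^{r+1}\sin(\pi\kappa_r/b)}$ where $k=\kappa_0+\kappa_1 b+\cdots+\kappa_r b^r$ with $\kappa_r\ne 0$. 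Taking the supremum over all intervals $[0,x)$, and then over all $[\bsa,\bsb)=[0,b)\setminus[0,a)$ via the one-dimensional reduction $D_N=\sup_{0\le x\le 1}|\frac{\#\{n<N:y_n<x\}}{N}-x|$, yields the claimed inequality.

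I expect the main obstacle to be the careful bookkeeping in the third step: establishing the uniform bound $|\widehat{\mathbf 1_E}(k)|\le\rho_b(k)$ over \emph{all} finite unions $E$ of elementary intervals of length $b^{-g}$, rather than just over single intervals or initial segments $[0,ab^{-g})$. The point is that the Fourier coefficient of an interval is a partial geometric sum whose modulus one must dominate, and then one must argue that consecutive such partial sums, when summed over a union, cannot reinforce beyond the single-block worst case — this is where the exact constant $2/(b^{r+1}\sin(\pi\kappa_r/b))$ and the role of only the \emph{leading} nonzero digit $\kappa_r$ of $k$ emerge. Everything else (the $b^{-g}$ approximation of intervals by $b$-adic unions, the orthonormality of the $\gamma_k$, and the triangle inequality over $k$) is routine, and the whole argument is already essentially contained in \cite[and the references]{hel2009,hel2010,HN}, so the proof will mostly amount to assembling these pieces and citing the relevant computations there.
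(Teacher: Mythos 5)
Your plan contains a genuine gap at exactly the step you flag as the ``main obstacle'', and the obstacle is not just hard but insurmountable in the generality you set up. You reduce the problem to sets $E$ that are \emph{arbitrary} finite unions of elementary intervals of length $b^{-g}$ and then hope that $|\widehat{\mathbf{1}_E}(k)|\le\rho_b(k)$ uniformly over all such unions, because the partial sums ``cannot reinforce beyond the single-block worst case''. They can. Take $k=b^{g-1}$: then $\gamma_k$ is constant on each elementary interval of length $b^{-g}$, and as the interval varies its values run exactly through all $b^g$-th roots of unity. Choosing $E$ to be the union of those elementary intervals on which $\mathrm{Re}\,\gamma_k\ge 0$ gives $|\widehat{\mathbf{1}_E}(k)|$ bounded below by an absolute constant (roughly $1/\pi$), whereas $\rho_b(k)=2/(b^{g}\sin(\pi/b))$ tends to $0$ as $g$ grows. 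So no bound of the claimed form holds for arbitrary unions, and your third step collapses as stated. The repair is that this generality is never needed: in dimension one the only sets that arise are $b$-adic \emph{intervals}, i.e.\ single blocks of consecutive elementary intervals, equivalently differences of two anchored intervals $[0,ab^{-g})$. For those the $k$-th coefficient can be computed and bounded explicitly (this is precisely the computation in \cite{hel2009}), the relevant bound involving only the leading nonzero digit $\kappa_r$ of $k$, and the factor $2$ in $\rho_b(k)$ is exactly what absorbs the passage from anchored intervals $[0,x)$ to general intervals $[a,b)$. Related to this, your closing reduction $D_N=\sup_{0\le x\le 1}\bigl|\#\{n<N:y_n<x\}/N-x\bigr|$ is the star discrepancy, not the extreme discrepancy used in the paper; the two-endpoint bookkeeping has to be done inside the coefficient estimate rather than by appealing to $D_N\le 2D_N^*$ afterwards, which would spoil the term $1/b^g$.

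You should also be aware that the paper does not reprove this inequality at all: its proof consists of citing \cite[Theorem~3.6]{hel2009} for prime $b$ and remarking that \cite[Lemmas~2.10 and~2.11]{HN} extend it to arbitrary $b\ge 2$ (cf.\ \cite{heltalk}). Your intention to reconstruct the $b$-adic Erd\H{o}s--Tur\'an--Koksma argument is in the spirit of those references and is fine in principle, but to be a proof it must restrict the Fourier-analytic step to $b$-adic intervals (or simply import Hellekalek's coefficient estimates), not quantify over all unions of elementary intervals.
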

\begin{proof}
For the special case of a prime $b$, this result 
was shown by Hellekalek \cite[Theorem~3.6]{hel2009}. Using \cite[Lemma~2.10 and 2.11]{HN} 
it is easy to see that Hellekalek's result can be generalized to the one given in the lemma (cf. \cite{heltalk}).
\end{proof}

We show a discrepancy bound for the van der Corput-sequence indexed by the $q$-ary sum-of-digits function for small values of $q$. 
This result improves on the first part of Corollary~\ref{cor1} for van der Corput-sequences. Moreover, it shows that the general 
lower bound from Theorem~\ref{thmsodgen} is best possible in the order of magnitude in $N$.

\begin{theorem}\label{thmvdc}
Let $b,q \ge 2$ be integers with $q <14$, let $(x_n)_{n\ge 0}$ be the van der Corput-sequence in base $b$ and let $(s_q(n))_{n \ge 0}$ 
be the sequence of the $q$-adic sum-of-digits function. Then we have $$D_N((x_{s_q(n)})_{n \ge 0}) \ll_{b,q} \frac{1}{\sqrt{\log N}}.$$ 
\end{theorem}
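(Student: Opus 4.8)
The plan is to estimate the character sums $\frac1N\sum_{n=0}^{N-1}\gamma_k(x_{s_q(n)})$ appearing in Lemma~\ref{lem_hel_gen} and show they are small enough, after weighting by $\rho_b(k)$ and summing over $k$, to give the claimed $1/\sqrt{\log N}$ bound. First I would fix $g$ in Lemma~\ref{lem_hel_gen} to be of order $\log N$ (say $b^g\approx N$ or slightly larger), so that the first term $1/b^g$ is negligible compared to $1/\sqrt{\log N}$, and I am left with controlling $\sum_{k=1}^{b^g-1}\rho_b(k)\,|S_N(k)|$ where $S_N(k):=\frac1N\sum_{n=0}^{N-1}\gamma_k(x_{s_q(n)})$.

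Next I would unwind $S_N(k)$ using the structure of the sum-of-digits function. Writing $x_m=\varphi_b(m)=\phi_b(m)$ and $\gamma_k(x_m)=\chi_k(\phi_b^+(x_m))=\chi_k(m)=\exp(2\pi\icomp\,\phi_b(k)\,m)$ for $m\in\NN_0$, the summand becomes $\exp(2\pi\icomp\,\phi_b(k)\,s_q(n))$. With $N$ between $q^d$ and $q^{d+1}$ and using the digit decomposition $n=r_0+r_1q+\cdots$, the sum over a full block $0\le n<q^d$ factors as a product over the $d$ digit positions, $\prod_{i=0}^{d-1}\bigl(\sum_{r=0}^{q-1} \ee(\phi_b(k) r)\bigr)$ — essentially the generating-function identity already used in Section~\ref{secsodsdim}. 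For a general $N$ one splits into such blocks via the divisibility chain $N_j=q^j$ as in the proof of Theorem~\ref{thmgeneral}, and each block contributes a factor of the form $\sum_{r=0}^{q-1}\ee(\alpha r)$ with $\alpha=\phi_b(k)\in[0,1)$. The key quantity is therefore $T(\alpha):=\bigl|\tfrac1q\sum_{r=0}^{q-1}\ee(\alpha r)\bigr|=\bigl|\tfrac1q\cdot\frac{\sin(\pi q\alpha)}{\sin(\pi\alpha)}\bigr|$, and $|S_N(k)|$ is bounded (up to the geometric block-splitting overhead, which costs only a constant since the $N_{j+1}/N_j=q$) by $T(\phi_b(k))^{d'}$ for some $d'$ of order $d\asymp\log N$. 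So everything reduces to showing $\sum_{k=1}^{b^g-1}\rho_b(k)\,T(\phi_b(k))^{cd}\ll_{b,q} 1/\sqrt d$.

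Then I would analyze this weighted sum. One has $T(\alpha)\le 1$ with equality only at $\alpha=0$; away from $0$, $T(\alpha)\le 1-c_q'\|\alpha\|^2$ or at least $T(\alpha)\le 1-\delta$ for $\|\alpha\|$ bounded below. The crucial input of the hypothesis $q<14$ enters here: one needs $T(\alpha)^{cd}$, summed against the weights $\rho_b(k)\asymp b^{-r}/\|{\cdot}\|$-type factors (where $b^r\le k<b^{r+1}$), to behave like the central term of a convolution power, i.e. to be of size $\asymp 1/\sqrt d$ rather than larger. Splitting the $k$-sum dyadically by the leading power $b^r$ of $k$, the terms with $\phi_b(k)$ very close to $0$ (i.e. $k$ small, $r$ small) give $T$ close to $1$ and contribute the main term; the bound $\rho_b(k)\le \rho_b(1)\asymp 1$ for $k=1$ and a Gaussian-type estimate $\sum_{k}\ee(\ldots)T(\phi_b(k))^{cd}$ resembling Lemma~\ref{lemdrmlar}'s local central limit behavior should yield $O(1/\sqrt d)$. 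The constraint $q<14$ is presumably exactly what guarantees that $\rho_b(k)T(\phi_b(k))^{cd}$ decays fast enough in the intermediate range so the tail does not overwhelm the $1/\sqrt d$ main term — for $q\ge 14$ the weights $\rho_b$ grow too fast relative to the contraction rate of $T$.

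The main obstacle I expect is precisely this last estimate: pairing the multiplicative weights $\rho_b(k)$ (which are large when $k$ has small leading digit in base $b$) against the powers $T(\phi_b(k))^{cd}$ and extracting the sharp $1/\sqrt{\log N}$ order, uniformly, while tracking the dependence on $b$ and $q$ and verifying that $q<14$ is the exact threshold. I would handle it by a careful case split on the size of $\|\phi_b(k)\|$ relative to $1/\sqrt d$: for $\|\phi_b(k)\|\lesssim 1/\sqrt d$ use the quadratic expansion $T(\alpha)\approx\exp(-c_q''\alpha^2 q^2)$ and recognize a Riemann sum for a Gaussian integral of size $1/\sqrt d$; for $\|\phi_b(k)\|\gtrsim 1/\sqrt d$ use $T(\alpha)^{cd}\le (1-\delta)^{cd}$ which is super-polynomially small and easily dominates the at-most-polynomial growth of $\sum\rho_b(k)$, with the numeric comparison between the base of that exponential decay and the weight growth being where $14$ appears. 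Assembling these pieces with the block-splitting from Theorem~\ref{thmgeneral} completes the argument.
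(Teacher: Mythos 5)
You follow the same general strategy as the paper (Lemma~\ref{lem_hel_gen}, the factorization $T_k(q^m)=(T_k(q))^m$ with the quadratic contraction of Lemma~\ref{le1}, and a summation over $k$ grouped by the leading $b$-power), but two of your concrete choices create genuine gaps. First, truncating at $b^g\approx N$ cannot work. Already for $N=q^d$, in every block $b^r\le k<b^{r+1}$ with $b^{r+1}\ge\sqrt d$ there are of order $b^{r+1}/\sqrt d$ indices $k$ with $\|\phi_b(k)\|\le c/\sqrt{d}$, for which $|T_k(q^d)|=|T_k(q)|^d$ stays bounded below by a positive constant, while $\rho_b(k)\ge 2b^{-(r+1)}$; hence each such block contributes at least $c'/\sqrt d$ to $\sum_k\rho_b(k)|T_k(N)|$, and with $g\asymp\log_b N$ there are $\asymp\log N$ such blocks, so this sum has order $\sqrt{\log N}$ and is not even $o(1)$, no matter how sharp the individual character-sum estimates are. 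The paper instead keeps $g$ small: it proves $\sum_{k<b^g}\rho_b(k)|T_k(N)|\ll_{b,q}b^g/\log N$ and balances this against the truncation error $1/b^g$ by choosing $b^g\asymp\sqrt{\log N}$. Second, even with a small $g$, the estimate you aim for --- the Gaussian/local-CLT bound of size about $1/\sqrt m$ per block of $k$'s --- is too weak for the stated theorem: it gives $\sum_{k<b^g}\rho_b(k)|T_k(q^m)|\ll g/\sqrt m$, hence $D_N\ll b^{-g}+g/\sqrt{\log N}$, and since one is forced to take $b^g\ge\sqrt{\log N}$ this only yields $(\log\log N)/\sqrt{\log N}$, i.e.\ nothing beyond Theorem~\ref{thmsodgen}. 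The paper's decisive computation is different: it bounds the block sum by $2b^{-m(r+1)}\int_1^{b^{2r+2}}u^{m/2}\rd u\ll b^{2r+2}/(m+1)$, so that after multiplying by $\rho_b(k)\asymp b^{-(r+1)}$ the sum over $r$ is geometric, dominated by its top term $b^g/(m+1)$; the decay $1/(m+1)$ in $m$ (rather than $1/\sqrt m$) is exactly what removes the $\log\log N$ factor.

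Two further points. Your reduction for general $N$ (``$|S_N(k)|$ is bounded by $T(\phi_b(k))^{d'}$ with $d'\asymp d$, at the cost of a constant'') is not correct as stated: for instance $T(\phi_b(k))$ can vanish while $T_k(N)\neq0$ when $N$ is not a power of $q$. What is true, and what the paper uses (Lemma~\ref{le2}), is the weighted average $|T_k(N)|\le\frac1N\sum_r a_rq^r|T_k(q^r)|$ over the $q$-adic digits of $N$; one then still needs the separate estimate $\frac1N\sum_m a_mq^m/(m+1)\ll 1/\log N$ (obtained by splitting the sum at $m=R/2$), a step your sketch omits and which carries real weight in the final bound. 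Finally, the hypothesis $q<14$ is not a delicate threshold balancing the growth of the weights $\rho_b$ against the contraction rate of $T$: in the paper it is used only to guarantee $16(q-1)/q^2\ge1$, so that $1-\frac{16(q-1)}{q^2}x^2\le1-x^2$ and the integral comparison above applies verbatim.
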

\begin{remark}\rm
In view of Theorem~\ref{thmsodgen}, the upper bound in Theorem~\ref{thmvdc} is best possible with respect to the order of magnitude in $N$.
\end{remark}

Before we give the proof of Theorem~\ref{thmvdc}, we need some preparations and auxiliary results. Writing $\mathrm{e}(x) := \exp(2\pi \mathrm{i} x)$ for short, we have 
\begin{eqnarray*}
\frac{1}{N}\sum_{n=0}^{N-1}\gamma_{k}(x_{s_q(n)})= \frac{1}{N}\sum_{n=0}^{N-1} \mathrm{e}\left(s_q(n) \phi_{b}(k)\right)=:T_k(N).
\end{eqnarray*}

\begin{lemma}\label{le1}
Let $b,q \ge 2$ be integers, let $k\in \NN$ and let $(x_n)_{n \ge 0}$ be the van der Corput-sequence in base $b$. 
Then for any $m \in \NN_0$ it is true that 
$$|T_k(q^m)| \le \left(1- \frac{16(q-1)}{q^2}\|\phi_{b}(k)\|^2\right)^{m/2},$$ 
where $\|x\|$ is the distance of a real $x$ to the nearest integer.
\end{lemma}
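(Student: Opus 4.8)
The plan is to compute $T_k(q^m)$ explicitly by exploiting the digit structure of the sum-of-digits function. Every $n$ with $0\le n<q^m$ has a base-$q$ expansion $n=n_0+n_1q+\cdots+n_{m-1}q^{m-1}$ with each $n_i\in\{0,1,\ldots,q-1\}$, and $s_q(n)=n_0+n_1+\cdots+n_{m-1}$. Hence
$$
T_k(q^m)=\frac{1}{q^m}\sum_{n=0}^{q^m-1}\ee\!\left(s_q(n)\phi_b(k)\right)
=\frac{1}{q^m}\prod_{i=0}^{m-1}\sum_{a=0}^{q-1}\ee\!\left(a\,\phi_b(k)\right)
=\left(\frac{1}{q}\sum_{a=0}^{q-1}\ee\!\left(a\,\phi_b(k)\right)\right)^{m},
$$
so everything reduces to estimating the modulus of the single geometric sum $S:=\frac{1}{q}\sum_{a=0}^{q-1}\ee(a\theta)$ with $\theta=\phi_b(k)$. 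Summing the geometric series gives $|S|=\left|\frac{\sin(\pi q\theta)}{q\sin(\pi\theta)}\right|$ (with the convention $|S|=1$ when $\theta\in\ZZ$), and then
$$
|T_k(q^m)|=|S|^m=\left|\frac{\sin(\pi q\theta)}{q\sin(\pi\theta)}\right|^{m}.
$$

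The remaining task is to show $|S|^2\le 1-\frac{16(q-1)}{q^2}\|\theta\|^2$, which then yields the claimed bound after taking the $m/2$ power. Here I would use the elementary Dirichlet-kernel identity $\sum_{a=0}^{q-1}\ee(a\theta)$ and instead of the $\sin/\sin$ form it is cleaner to write $|S|^2=\frac{1}{q^2}\left|\sum_{a=0}^{q-1}\ee(a\theta)\right|^2=\frac{1}{q^2}\sum_{a,a'=0}^{q-1}\ee((a-a')\theta)=\frac{1}{q}+\frac{2}{q^2}\sum_{\ell=1}^{q-1}(q-\ell)\cos(2\pi\ell\theta)$. Using $\cos x\le 1-\frac{2}{\pi^2}x^2$ for $|x|\le\pi$ is not directly applicable since $2\pi\ell\theta$ need not lie in that range, so instead I would use $1-\cos(2\pi\ell\theta)\ge \frac{8}{1}\|\ell\theta\|^2\ge$ (something)$\|\theta\|^2$; more precisely, $1-\cos(2\pi x)\ge 8\|x\|^2$ for all real $x$, and for the single term $\ell=1$ one has $1-\cos(2\pi\theta)\ge 8\|\theta\|^2$. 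Keeping only the $\ell=1$ contribution in the deficit and bounding the rest crudely:
$$
1-|S|^2=\frac{2}{q^2}\sum_{\ell=1}^{q-1}(q-\ell)\bigl(1-\cos(2\pi\ell\theta)\bigr)\ \ge\ \frac{2}{q^2}(q-1)\bigl(1-\cos(2\pi\theta)\bigr)\ \ge\ \frac{16(q-1)}{q^2}\|\theta\|^2,
$$
where the first inequality drops the nonnegative terms $\ell\ge 2$ (each $1-\cos\ge 0$). Substituting $\theta=\phi_b(k)$ gives exactly $|T_k(q^m)|^2=|S|^{2m}\le\bigl(1-\frac{16(q-1)}{q^2}\|\phi_b(k)\|^2\bigr)^m$, hence the lemma.

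The only genuinely delicate point is the inequality $1-\cos(2\pi x)\ge 8\|x\|^2$, i.e. $\sin^2(\pi x)\ge 4\|x\|^2$; this is the standard bound $|\sin(\pi x)|\ge 2\|x\|$ (valid because on $[0,1/2]$ the concave function $\sin(\pi x)$ lies above its chord $2x$), squared. Everything else — the factorization of the exponential sum over digits, the geometric series, and dropping nonnegative terms — is routine. I expect no real obstacle; the main thing to be careful about is the edge case $\phi_b(k)\in\ZZ$ (which for $k\in\NN$ cannot actually happen since $\phi_b(k)\in(0,1)$, so $\|\phi_b(k)\|=\phi_b(k)>0$), and keeping the constant $16(q-1)/q^2$ exact rather than merely up to a constant.
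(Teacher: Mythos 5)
Your proposal is correct and follows essentially the same route as the paper: factor $T_k(q^m)=(T_k(q))^m$ over the base-$q$ digits, expand $|T_k(q)|^2$ as a double exponential sum, discard all but the $q-1$ pairs with digit difference $1$, and apply $\sin^2(\pi x)\ge 4\|x\|^2$ (equivalently $1-\cos(2\pi x)\ge 8\|x\|^2$) to obtain the exact constant $\tfrac{16(q-1)}{q^2}$. Grouping the pairs by their difference $\ell$ and keeping only $\ell=1$ is just a cosmetic reorganization of the paper's computation, so there is nothing to add.
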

\begin{proof}
First observe that 
$$T_k(q^m)=\frac{1}{q^m}\sum_{n_0,\ldots,n_{m-1}=0}^{q-1} \mathrm{e}((n_0+\ldots+n_{m-1})\phi_{b}(k)) = (T_k(q))^m.$$ 
We now proceed as in \cite{PS02}. We use the identities $\exp(\icomp x)+\exp(-\icomp x)=2 \cos x$ and $\cos(2x)=1-2 \sin^2 x$ to obtain
\begin{eqnarray*}
|T_k(q)|^2 & = & \frac{1}{q^2} \sum_{n,n'=0}^{q-1} \ee\left((n-n') \phi_{b}(k)\right)\\
& = & \frac{1}{q^2}\left(q+  \sum_{n,n'=0\atop n < n'}^{q-1}\left(\ee\left((n-n') \phi_{b}(k)\right)+\ee\left(-(n-n') \phi_{b}(k)\right)\right)\right)\\
& = & \frac{1}{q^2}\left(q+  2 \sum_{n,n'=0\atop n < n'}^{q-1} \cos\left(2 \pi (n-n')\phi_{b}(k)\right)\right)\\
& = & \frac{1}{q^2}\left(q+  2 \sum_{n,n'=0\atop n < n'}^{q-1} \left(1-2\sin^2\left( \pi(n-n') \phi_{b}(k)\right)\right)\right)\\
& = & 1-\frac{4}{q^2} \sum_{n,n'=0\atop n < n'}^{q-1}\sin^2\left( \pi(n-n') \phi_{b}(k)\right)\\
& \le & 1- \frac{4(q-1)}{q^2} \sin^2(\pi \phi_{b}(k))\\
& \le & 1- \frac{16(q-1)}{q^2}\|\phi_{b}(k)\|^2,
\end{eqnarray*}
 Therefore, 
$$|T_k(q^m)| \le \left(1- \frac{16(q-1)}{q^2}\|\phi_{b}(k)\|^2\right)^{m/2}.$$ 
\end{proof}
We also need the following lemma.
\begin{lemma}\label{le2}
For $k\in\NN$ and any $N \in \NN$ with $q$-adic expansion $N=\sum_{r=0}^R a_r q^r$ we have $$ |T_k(N)| \le \frac{1}{N} \sum_{r=0}^R a_r q^r |T_k(q^r)|.$$
\end{lemma}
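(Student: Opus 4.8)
The statement to prove is Lemma~\ref{le2}: for $k\in\NN$ and $N\in\NN$ with $q$-adic expansion $N=\sum_{r=0}^R a_r q^r$, we have $|T_k(N)|\le \frac{1}{N}\sum_{r=0}^R a_r q^r |T_k(q^r)|$.

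The plan is to exploit the additivity of the sum-of-digits function over blocks of digits. Recall $T_k(N)=\frac1N\sum_{n=0}^{N-1}\ee(s_q(n)\phi_b(k))$, so $N\,T_k(N)=\sum_{n=0}^{N-1}\ee(s_q(n)\phi_b(k))$. I would split the range $\{0,1,\dots,N-1\}$ according to the greedy/digit decomposition of $N$: the integers $n$ with $0\le n<N$ are partitioned into consecutive blocks, where for each $r$ from $R$ down to $0$ and each $\ell\in\{0,\dots,a_r-1\}$ there is a block $B_{r,\ell}$ of length $q^r$ consisting of the integers $n$ with $n = c_{r,\ell} + n'$, $0\le n'<q^r$, and $c_{r,\ell}:=\sum_{s>r} a_s q^s + \ell q^r$. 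This is exactly the standard "mixed-radix" decomposition of the interval $[0,N)$ into $\sum_r a_r$ elementary blocks of the respective lengths $q^r$.

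The key observation is that for $n$ in such a block, $n = c_{r,\ell} + n'$ with $0 \le n' < q^r$ means the base-$q$ digits of $n$ below position $r$ are exactly those of $n'$, and the digits at position $\ge r$ are those of $c_{r,\ell}$; since these digit ranges are disjoint, $s_q(n) = s_q(c_{r,\ell}) + s_q(n')$. Hence
$$
\sum_{n\in B_{r,\ell}} \ee(s_q(n)\phi_b(k)) = \ee(s_q(c_{r,\ell})\phi_b(k)) \sum_{n'=0}^{q^r-1}\ee(s_q(n')\phi_b(k)) = \ee(s_q(c_{r,\ell})\phi_b(k))\, q^r\, T_k(q^r).
$$
Summing over all blocks and applying the triangle inequality, the phase factors $\ee(s_q(c_{r,\ell})\phi_b(k))$ have modulus $1$ and drop out, leaving
$$
N\,|T_k(N)| \le \sum_{r=0}^R \sum_{\ell=0}^{a_r-1} q^r\, |T_k(q^r)| = \sum_{r=0}^R a_r q^r |T_k(q^r)|,
$$
which is the claim after dividing by $N$.

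The only point requiring care — and the main (mild) obstacle — is verifying that the blocks $B_{r,\ell}$ genuinely partition $[0,N)$ and that the additivity $s_q(c_{r,\ell}+n')=s_q(c_{r,\ell})+s_q(n')$ holds, i.e. that there is no carry when adding $n'<q^r$ to $c_{r,\ell}$, which is a multiple of $q^r$. Both facts are immediate from $c_{r,\ell}$ being a multiple of $q^r$ together with the bound $\ell<a_r\le q-1$ and the definition of the $q$-adic expansion; no genuine difficulty arises. I would present the block decomposition explicitly (perhaps by an inner induction on $R$, peeling off the top digit $a_R$), note the digit-disjointness giving additivity of $s_q$, and then conclude with the triangle inequality as above.
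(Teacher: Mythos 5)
Your proof is correct and is essentially the paper's argument: both rest on decomposing $\{0,\dots,N-1\}$ along the $q$-adic digits of $N$, using the carry-free additivity $s_q(c+n')=s_q(c)+s_q(n')$ to pull out a unimodular phase from each block, and concluding with the triangle inequality. The only cosmetic difference is that you partition directly into $\sum_r a_r$ blocks of length $q^r$, whereas the paper first splits into $R+1$ blocks of length $a_r q^r$ and then factors each inner sum as $\sum_{u=0}^{a_r-1}\ee\left(u\,\phi_b(k)\right)$ times $q^r T_k(q^r)$, bounding that factor by $a_r$; the two bookkeepings yield the identical bound.
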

\begin{proof}
For $N=\sum_{r=0}^R a_r q^r$,
$$\{0,\ldots,N-1\}=\bigcup_{r=0}^R \{a_Rq^R+\cdots +a_{r+1} q^{r+1},\ldots,a_Rq^R+\cdots +a_{r} q^{r}-1\},$$ 
and hence
\begin{eqnarray*}
N |T_k(N)| & = & \left|\sum_{n=0}^{N-1} \ee\left(s_q(n) \phi_{b}(k)\right)\right| \\ & = & \left| \sum_{r=0}^R \ee\left((a_R+\cdots+a_{r+1})\phi_{b}(k)\right) \sum_{n=0}^{a_r q^r -1} \ee\left(s_q(n) \phi_{b}(k)\right)\right|\\
& \le &  \sum_{r=0}^R \left| \sum_{n=0}^{a_r q^r -1} \ee\left(s_q(n) \phi_{b}(k)\right)\right|\\
& = &  \sum_{r=0}^R \left|\sum_{u=0}^{a_r -1} \ee\left(u \phi_{b}(k)\right) \sum_{n=0}^{q^r -1} \ee\left(s_q(n) \phi_{b}(k)\right)\right|\\
& \le &  \sum_{r=0}^R a_r \left|\sum_{n=0}^{q^r -1} \ee\left(s_q(n) \phi_{b}(k)\right)\right|\\
& = & \sum_{r=0}^R a_r q^r |T_k(q^r)|.
\end{eqnarray*}
\end{proof}
We are now ready to give the proof of Theorem~\ref{thmvdc}.
\begin{proof}
For $k \in\{b^r,\ldots,b^{r+1}-1\}$ we have $\varphi_b(k)=\frac{A_k}{b^{r+1}}$ with 
$A_k \in \{1,\ldots,b^{r+1}-1\}$, where $A_{k_1}\neq A_{k_2}$ for $k_1\neq k_2$.
Hence we obtain from Lemma~\ref{le1}
\begin{eqnarray*}
\sum_{k=1}^{b^g -1} \rho_b(k) |T_k(q^m)| & \le & \sum_{r=0}^{g-1} \frac{2}{b^{r+1} 
\sin(\pi/b)} \sum_{k=b^r}^{b^{r+1}-1} \left(1- \frac{16(q-1)}{q^2}\left\|\frac{A_k}{b^{r+1}}\right\|^2\right)^{m/2}\\
& \le & \sum_{r=0}^{g-1} \frac{2}{b^{r+1} \sin(\pi/b)} \sum_{a=1}^{b^{r+1}-1} 
\left(1- \frac{16(q-1)}{q^2}\left\|\frac{a}{b^{r+1}}\right\|^2\right)^{m/2}.
\end{eqnarray*}
For the inner sum we have 
\begin{eqnarray*}
\lefteqn{\sum_{a=1}^{b^{r+1}-1} \left(1- \frac{16(q-1)}{q^2}\left\|\frac{a}{b^{r+1}}\right\|^2\right)^{m/2}}\\ 
& = & \sum_{1 \le a < b^{r+1}/2}\left(1- \frac{16(q-1)}{q^2}\frac{a^2}{b^{2r+2}}\right)^{m/2}\\
&& \mbox{}+\sum_{b^{r+1}/2 \le a < b^{r+1}}\left(1- \frac{16(q-1)}{q^2}\left(1-\frac{a}{b^{r+1}}\right)^2\right)^{m/2}\\
& = & \frac{1}{b^{m(r+1)}} \sum_{1 \le a < b^{r+1}/2}\left(b^{2 r+2}-\frac{16(q-1)}{q^2} a^2\right)^{m/2}\\
&&\mbox{}+\frac{1}{b^{m(r+1)}}\sum_{b^{r+1}/2 \le a < b^{r+1}}\left(b^{2r+2}-\frac{16(q-1)}{q^2}(b^{r+1}-a)^2\right)^{m/2}\\
& = & \frac{2}{b^{m(r+1)}} \sum_{1 \le a < b^{r+1}/2} \left(b^{2 r+2}-\frac{16(q-1)}{q^2} a^2\right)^{m/2} 
+ \delta(b) \left(1-\frac{4(q-1)}{q^2}\right)^{m/2},
\end{eqnarray*}
where $\delta(b)=0$ when $b$ is odd and $\delta(b)=1$ when $b$ is even. 

The assumption $q <14$ yields $\frac{16(q-1)}{q^2} \ge 1$, and hence 
\begin{eqnarray*}
\sum_{a=1}^{b^{r+1}-1} \left(1- \frac{16(q-1)}{q^2}\left\|\frac{a}{b^{r+1}}\right\|^2\right)^{m/2} & \le &  
\frac{2}{b^{m(r+1)}} \sum_{1 \le a < b^{r+1}/2} \left(b^{2 r+2}- a^2\right)^{m/2} +  \left(\frac{3}{4}\right)^{m/2} \\
& \le & \frac{2}{b^{m(r+1)}} \sum_{u=1}^{b^{2r+2}-1} u^{m/2} +  \left(\frac{3}{4}\right)^{m/2}\\
& \le & \frac{2}{b^{m(r+1)}} \int_1^{b^{2r+2}}u^{m/2} \rd u +  \left(\frac{3}{4}\right)^{m/2}\\
& \ll_{b,q} & \frac{b^{2 r+2}}{m+1} +  \left(\frac{3}{4}\right)^{m/2}
\end{eqnarray*}
with an implied constant depending only on $b$ and $q$. Therefore 
\begin{eqnarray}
\sum_{k=1}^{b^g -1} \rho_b(k) |T_k(q^m)| \ll_{b,q} 
\sum_{r=0}^{g-1} \frac{1}{b^{r+1}} \left(\frac{b^{2(r+1)}}{m+1}+\left(\frac{3}{4}\right)^{m/2}\right) \ll_{b,q}  \frac{b^g}{m+1} \label{eq1},
\end{eqnarray}
again with implied constants depending only on $b$ and $q$.

Assume that $N=\sum_{r=0}^R a_r q^r$. Then, using Lemma~\ref{le2} and \eqref{eq1}, we obtain
\begin{eqnarray*}
\sum_{k=1}^{b^g -1} \rho_b(k) |T_k(N)| & \le & \frac{1}{N} \sum_{m=0}^{R} a_m q^m \sum_{k=1}^{b^g -1} \rho_b(k) |T_k(q^m)|\\
& \ll_{b,q} &  b^{g} \frac{1}{N} \sum_{m=0}^R a_m \frac{q^m}{m+1}.
\end{eqnarray*}
Since 
\begin{eqnarray*}
\frac{1}{N} \sum_{m=0}^R a_m \frac{q^m}{m+1} & \le & \frac{1}{N}\sum_{m=0}^{\lfloor R/2\rfloor } 
a_m q^m +\frac{1}{N} \sum_{m=\lfloor R/2\rfloor +1}^{R} a_m \frac{q^m}{m+1} \\
& \ll_q & \frac{q^{R/2}}{N}+\frac{1}{R} \ll_q  \frac{1}{\log N}
\end{eqnarray*}
we obtain $$\sum_{k=1}^{b^g -1} \rho_b(k) |T_k(N)| \ll_{b,q} \frac{b^{g}}{\log N}.$$ 
From Lemma~\ref{lem_hel_gen} 
it follows that 
$$D_N((x_{s_q(n)})_{n \ge 0}) \ll_{b,q} \frac{1}{b^g}+\frac{b^{g}}{\log N}.$$ 
Choosing $g = \lfloor \log_b \sqrt{\log N} \rfloor$ yields 
$$D_N((x_{s_q(n)})_{n \ge 0}) \ll_{b,q} \frac{1}{\sqrt{\log N}}.$$
\end{proof}

\begin{remark}\rm
We remark that, in principle, the method of proof based on Lemma~\ref{lem_hel_gen} can not only be used for van der Corput-sequences,
but also for Halton-sequences in higher dimensions. However, this leads to a discrepancy bound of order $(\log N)^{-\frac{1}{s+1}}$, 
which is considerably weaker than the one presented in Theorem~\ref{thmsodgen}. 
\end{remark}

\section{Other index-transformations}\label{secother}

In this section, we would now like to discuss index-transformed Halton- and digital $(t,s)$-sequences indexed by a different kind
of sequence than the sum-of-digits function, as, e.g., $(\lfloor n^\alpha\rfloor)_{n\ge 0}$ with $0 <\alpha < 1$. 
The following theorem provides another general result, namely lower and upper bounds on the discrepancy
of sequences indexed by functions which in some sense are ``moderately`` monotonically increasing.
\begin{theorem}\label{thmsqrt}
 Let $A\in\NN_0$ and write $\NN_A:=\{A,A+1,A+2,\ldots\}$. Let $f:\NN_0\To\NN_A$ be surjective and monotonically increasing.
Moreover, define, for $k\in\NN_A$, 
$$F(k):=\#\{n\, :\, n\in\NN_0, f(n)=k\}.$$
Under the assumption that $F(k)$ is monotonically increasing in $k$ for sufficiently large $k$, the following three
assertions hold.
\begin{enumerate}
 \item For an arbitrary sequence $(\bsx_n)_{n\ge 0}$ in $[0,1)^s$ it is true that
 $$\frac{F(f(N)-1)}{N}\le D_N((\bsx_{f(n)})_{n\ge 0}).$$

\item For a Halton-sequence $(\bsx_n)_{n\ge 0}$ in co-prime bases $b_1,\ldots,b_s$,
 $$D_N((\bsx_{f(n)})_{n\ge 0})\le C \frac{2F(f(N-1)+1) (\log N)^s}{N},$$
where $C$ is a constant independent of $N$.

\item For a digital $(t,s)$-sequence $(\bsx_n)_{n\ge 0}$ over $\FF_p$ for prime $p$, 
 $$D_N((\bsx_{f(n)})_{n\ge 0})\le \widetilde{C} p^t \frac{2F(f(N-1)+1) (\log N)^s}{N},$$
where $\widetilde{C}$ is a constant independent of $N$.
\end{enumerate}
\end{theorem}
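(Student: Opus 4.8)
The plan is to prove the three assertions by reducing everything to counting the number of repetitions of each point $\bsx_k$ in the transformed sequence and then invoking the known discrepancy bounds for Halton- and digital $(t,s)$-sequences together with the triangle inequality for discrepancy. The starting observation is structural: since $f$ is monotonically increasing and surjective onto $\NN_A$, the index set $\{n : 0 \le n < N\}$ is partitioned into consecutive blocks on which $f$ is constant, and the values taken are a contiguous run $f(0), f(0)+1, \ldots, f(N-1)$ (possibly with the first and last blocks truncated). Consequently the multiset $(\bsx_{f(n)})_{n=0}^{N-1}$ consists of each of the points $\bsx_{f(0)}, \bsx_{f(0)+1}, \ldots, \bsx_{f(N-1)}$, where $\bsx_k$ occurs exactly $\#\{n : 0 \le n < N, f(n) = k\}$ times; this count equals $F(k)$ for all interior values $k$ and is at most $F(k)$ for the two boundary values.

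For assertion~1, I would exhibit a single point with high multiplicity. The value $k = f(N-1)$ is attained by $f$ at least once among $\{0,\ldots,N-1\}$, but I can do better by looking at $k' = f(N-1)-1$ (assuming $N$ large enough that this is $\ge A$ and in the range where $F$ is increasing): every $n$ with $f(n) = k'$ satisfies $f(n) < f(N-1) \le f(N-1)$, hence by monotonicity $n < N$, so $\bsx_{k'}$ appears at least $F(f(N)-1)$ times — wait, more carefully: all $n \in \NN_0$ with $f(n) = f(N)-1$ satisfy $f(n) < f(N)$ hence $n < N$, giving multiplicity $\ge F(f(N)-1)$. Placing an arbitrarily small box around $\bsx_{f(N)-1}$ forces $D_N \ge F(f(N)-1)/N$, exactly as in the lower-bound argument of Theorem~\ref{thmgeneral}.

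For assertions~2 and~3, I would use the same unimodal/monotone decomposition idea as in the proof of Theorem~\ref{thmgeneral}: write the multiset $(\bsx_{f(n)})_{n=0}^{N-1}$ as a sum of at most $F(f(N-1)+1)$ ``layers'', where the $i$-th layer is a block of consecutive points $\bsx_{B_i}, \bsx_{B_i+1}, \ldots, \bsx_{f(N-1)}$ for a suitable starting index $B_i$ (this works precisely because $F(k)$ is eventually increasing in $k$, so the higher-indexed points are the ones repeated more often; the finitely many small $k$ where monotonicity may fail contribute only a bounded additive error absorbed into the constant). Each layer is a contiguous block of a Halton- (resp.\ digital $(t,s)$-) sequence, whose discrepancy is $\le C(\log N)^s$ (resp.\ $\le \widetilde C p^t (\log N)^s$) times its reciprocal length — here one uses that contiguous blocks of Halton-sequences and of digital $(t,s)$-sequences have this discrepancy bound (this is exactly the kind of fact established in the appendix for the uniform discrepancy). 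Summing the $N_i D_{N_i}$ over the at most $2F(f(N-1)+1)$ layers via the triangle inequality for discrepancy (\cite[p.~115, Theorem~2.6]{kuinie}) yields $N D_N \le 2F(f(N-1)+1) \cdot C(\log N)^s$, and dividing by $N$ gives the claim; the factor $2$ accommodates the at most two boundary blocks where the multiplicity count is $\le F(\cdot)$ rather than exactly $F(\cdot)$ and where the argument $f(N-1)+1$ dominates $f(N-1)$.

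The main obstacle I anticipate is the bookkeeping around the non-monotone initial segment of $F$ and around the truncated boundary blocks: one must check that peeling off the layers really does produce \emph{contiguous} index blocks $\bsx_{B_i},\ldots,\bsx_{f(N-1)}$ (so that the Halton/$(t,s)$ block-discrepancy bound applies) rather than blocks with gaps, and that the finitely many ``bad'' small values of $k$ — only finitely many, by hypothesis — can be lumped into a bounded number of extra points whose total contribution to $N D_N$ is $O((\log N)^s)$ and hence harmless after dividing by $N$. The rest is a routine application of the triangle inequality exactly as in Theorem~\ref{thmgeneral}, together with the standard discrepancy estimates for Halton- and digital $(t,s)$-sequences and their consecutive sub-blocks.
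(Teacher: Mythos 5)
Your part~1 is exactly the paper's argument (including the small-interval trick; the only edge case you gloss over, $f(N)=A$, is trivial since then $F(f(N)-1)=0$). For parts~2 and~3, however, your route is genuinely different from the paper's. You peel the multiset $(\bsx_{f(n)})_{n=0}^{N-1}$ into at most roughly $F(f(N-1)+1)$ layers, each a block of \emph{consecutive} points $\bsx_B,\ldots,\bsx_E$ of the underlying sequence (contiguity follows from the eventual monotonicity of $F$ after discarding a bounded initial segment, a reduction the paper also makes), and then combine the triangle inequality for the discrepancy with the uniform-discrepancy estimates of the Appendix, $m\widetilde{D}_m \ll (\log m)^s$ for Halton- and $\ll b^t(\log m)^s$ for $(t,s)$-sequences. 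The paper instead argues test-box by test-box: it splits an interval into $O((\log N)^s)$ elementary boxes $\prod_{i=1}^s J(j_i,k_i)$, uses that for the Halton-sequence membership of $\bsx_v$ in such a box is a single congruence $v\equiv R \pmod{Q}$ with $Q=\prod_{i=1}^s b_i^{j_i}$ (Chinese Remainder Theorem), respectively that for a \emph{digital} $(t,s)$-sequence exactly $p^t$ indices in every block of length $Q=p^{j_1+\cdots+j_s+t}$ hit the box (a rank argument with the generator matrices), and then compares the resulting counts $\sum_w F(R+wQ)$ with $N/Q$ via the monotonicity of $F$, obtaining a per-box error $2F(f(N-1)+1)/N+1/(NQ)$. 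Your argument is shorter, reuses the layering idea of Theorem~\ref{thmgeneral}, and is more general: it needs nothing beyond the uniform-discrepancy bound, so it covers arbitrary (not necessarily digital) $(t,s)$-sequences; on the other hand it leans on the Appendix results, whereas the paper's counting is self-contained at the level of elementary intervals and yields the explicit factor $2F(f(N-1)+1)$. The bookkeeping you flag (truncated first and last multiplicities, the finitely many small $k$ where $F$ is not yet monotone, layers of length one) does work out as you say, and in fact your factor $2$ is unnecessary slack, since truncation only shortens layers; so up to constants you recover the stated bounds.
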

\begin{proof}
\begin{enumerate}
 \item Let $(\bsx_n)_{n\ge 0}$ be an arbitrary sequence in $[0,1)^s$, and let $f$ and $F$ be as in the theorem. 
If $f(N)=A$, then, due to the properties of $f$, we obtain $F(f(N)-1)=0$, so the lower bound on the discrepancy is trivially fulfilled.

If, on the other hand, $f(N)>A$, then it follows by the surjectivity of $f$ that there exist $n\in\NN_0$ such that $f(n)=f(N)-1$. Furthermore,
whenever $n$ is such that $f(n)=f(N)-1<f(N)$, it follows by the monotonicity of $f$ that $n<N$. Hence, the value $f(N)-1$ occurs $F(f(N)-1)$ times 
among $f(0),\ldots,f(N-1)$, and the point $\bsx_{f(N)-1}$ is attained $F(f(N)-1)$ times in the sequence $\bsx_{f(0)},\ldots,\bsx_{f(N-1)}$. 
The lower bound follows by considering an arbitrarily small interval containing $\bsx_{f(N)-1}$.

\item Without loss of generality, assume $f(0)=0$, i.e., $A=0$. 

Furthermore, it is no loss of generality to assume that $f(1)=1$ and that $F(k)$ is monotonically increasing in $k$ for $k\ge 0$. Indeed, if this is not the case, we can 
disregard a suitable number of initial elements $\bsx_{f(0)},\ldots,\bsx_{f(N_0)}$, without changing the discrepancy of the first $N$ points of the sequence $(\bsx_{f(n)})_{n\ge 0}$ by more than $\frac{N_0}{N}$. 

Let $b_1,\ldots,b_s\ge 2$ be co-prime integers and let $(\bsx_n)_{n\ge0}$ be the corresponding Halton-sequence. For 
estimating the discrepancy, we consider an arbitrary interval 
$$I:=\prod_{i=1}^s [0,\alpha^{(i)})\subseteq [0,1)^s,$$
for some $\alpha^{(1)},\ldots,\alpha^{(s)}\in (0,1]$. For each $i\in\{1,\ldots,s\}$,
choose $m_i$ as the minimal integer such that $N \le b_i^{m_i}$. 
Since $f(N-1)\le N-1$, the $i$-th component $x_{f(n)}^{(i)}$ of a point $\bsx_{f(n)}$, $1\le i\le s$, $0\le n\le N-1$, has
at most $m_i$ non-zero digits in its base $b_i$ representation. From this, it is easily derived that we can restrict ourselves to considering
only $\alpha^{(i)}$ with at most $m_i$ non-zero digits in their base $b_i$ expansion, $1\le i\le s$, as this assumption changes 
$D_N((\bsx_{f(n)})_{n\ge 0})$ by a term of order of at most $N^{-1}$. We can therefore write $I$ as 
the disjoint union of intervals
$$I(j_1,\ldots,j_s):=\prod_{i=1}^s \left[\sum_{r=1}^{j_i -1}\frac{\alpha_r^{(i)}}{b_i^r}, \sum_{r=1}^{j_i}\frac{\alpha_r^{(i)}}{b_i^r}\right),$$
where $ 1\le j_i\le m_i$ for $1\le i\le s$ and the $\alpha_r^{(i)}$ represent the base $b_i$ digits of $\alpha^{(i)}$. Each of the $I(j_1,\ldots,j_s)$ 
can in turn be written as the disjoint union of intervals
$$\prod_{i=1}^s J(j_i,k_i):=\prod_{i=1}^s \left[\sum_{r=1}^{j_i -1}\frac{\alpha_r^{(i)}}{b_i^r}+\frac{k_i}{b_i^{j_i}}, 
 \sum_{r=1}^{j_i-1}\frac{\alpha_r^{(i)}}{b_i^r}+\frac{k_i +1}{b_i^{j_i}}\right),$$
with $1\le j_i\le m_i$ and $0\le k_i\le \alpha_{j_i}^{(i)}-1$. If $\alpha_{j_i}^{(i)}=0$, 
then $J(j_i,k_i)$ is of zero volume containing no points. Hence we can 
restrict ourselves to considering only those $J(j_i,k_i)$ with $\alpha_{j_i}^{(i)}\ge 1$. 

Let now $i\in\{1,\ldots,s\}$ and $v\ge 0$ be fixed. By the construction
principle of the points of the Halton-sequence, we see that
$x_{v}^{(i)}$ is contained in $J(j_i,k_i)$ if and only
if
\begin{equation}\label{eqcondhalton}
\begin{pmatrix}v_0^{(i)}\\ \vdots\\ v_{j_i -2}^{(i)}\\ v_{j_i -1}^{(i)}\end{pmatrix}=
  \begin{pmatrix}\alpha_i^{(1)}\\ \vdots\\ \alpha_i^{(j_i -1)}\\ k_i\end{pmatrix},
\end{equation}
where the $v_r^{(i)}$, $0\le r\le j_i -1$ are the digits of $v$ in base $b_i$. 
Note that \eqref{eqcondhalton} has exactly one
solution $(v_0^{(i)},\ldots,v_{j_i -1}^{(i)})$ modulo $b_i$. Hence we can identify exactly one remainder
$R^{(i)}$ modulo $b_i^{j_i}$, such that $x_v^{(i)}\in
J(j_i,k_i)$ if and only if $v\equiv R^{(i)} \pmod{b_i^{j_i}}$. By
the Chinese Remainder Theorem, there exists exactly one remainder
$R$ modulo $Q:=\prod_{i=1}^s b_i ^{j_i}$ such
that
\[\bsx_v\in \prod_{i=1}^s J (j_i,k_i)\ \mbox{if and only if}\ v\equiv R \pmod{Q}.\]

We now deduce an estimate for the number of points among $\bsx_{f(0)},\ldots,\bsx_{f(N-1)}$ that are contained in 
an interval of the type $\prod_{i=1}^s J (j_i,k_i)$. For short, we denote this number by $A\left(\prod_{i=1}^s J (j_i,k_i)\right)$.

Note that there exists a number $\theta=\theta(R,Q,f(N-1))\in\{0,1\}$ such that $0=f(0)\le R+wQ\le f(N-1)$ if and only if
$w\in\{0,\ldots,\lfloor\frac{f(N-1)}{Q}\rfloor-1+\theta\}$, so
\begin{equation}\label{eqAlower}
A\left(\prod_{i=1}^s J (j_i,k_i)\right)\ge \sum_{w=0}^{\lfloor\frac{f(N-1)}{Q}\rfloor-2+\theta} F(R+wQ)\ge 
\sum_{w=0}^{\lfloor\frac{f(N-1)}{Q}\rfloor-2+\theta} F(wQ),
\end{equation}
where we used the monotonicity of $F$. On the other hand, with the same argument,
\begin{equation}\label{eqAupper}
A\left(\prod_{i=1}^s J (j_i,k_i)\right)\le \sum_{w=0}^{\lfloor\frac{f(N-1)}{Q}\rfloor-1+\theta} F(R+wQ)\le 
\sum_{w=1}^{\lfloor\frac{f(N-1)}{Q}\rfloor+\theta} F(wQ).
\end{equation}

For the following, let $K=\left\lfloor \frac{f(N-1)}{Q}\right\rfloor + \theta$. Let
\[\Sigma_A:= \sum_{r=0}^{(K-1)Q-1} F(r),\]
and note that we can write
\[\Sigma_A = \sum_{w=0}^{K-2} \sum_{r=0}^{Q-1} F(w Q + r) \ge  Q\sum_{w=0}^{K-2} F(w Q)
= Q \sum_{w=0}^{\lfloor\frac{f(N-1)}{Q}\rfloor-2+\theta} F(wQ).
\]
On the other hand, by the definition of $\theta$, 
\[\Sigma_A = \sum_{r=0}^{(\left\lfloor \frac{f(N-1)}{Q}\right\rfloor-1 + \theta)Q-1} F(r)\le\sum_{r=0}^{f(N-1)-1} F(r) \le N-1,\]
from which we conclude that
\begin{equation}\label{eqsumupper}
\sum_{w=0}^{\lfloor\frac{f(N-1)}{Q}\rfloor-2+\theta} F(wQ)\le\frac{N-1}{Q}.
\end{equation}

Moreover, let 
\[\Sigma_B:= \sum_{r=1}^{KQ} F(r),\]
for which we can derive, in the same way as the corresponding estimate for $\Sigma_A$,
\[\Sigma_B \le Q \sum_{w=1}^{\lfloor\frac{f(N-1)}{Q}\rfloor+\theta} F(wQ).\]
Again by the definition of $\theta$, 
\[\Sigma_B = \sum_{r=1}^{(\left\lfloor \frac{f(N-1)}{Q}\right\rfloor + \theta)Q} F(r)\ge\sum_{r=1}^{f(N-1)} F(r) =
  \#\{n\in\NN_0: 0 < f(n) \le f(N-1)\}\ge N-1,\]
where we used that $f(1)=1$ and that $f$ is monotonically increasing. Consequently,
\begin{equation}\label{eqsumlower}
\sum_{w=1}^{\lfloor\frac{f(N-1)}{Q}\rfloor+\theta} F(wQ)\ge\frac{N-1}{Q}.
\end{equation}

Note, furthermore, that
\begin{eqnarray}\label{eqsumdiff}
0\le \sum_{w=1}^{\lfloor\frac{f(N-1)}{Q}\rfloor+\theta} F(wQ)- 
  \sum_{w=0}^{\lfloor\frac{f(N-1)}{Q}\rfloor-2+\theta} F(wQ)
  &\le& F \left(\left(\left\lfloor \frac{f(N-1)}{Q}\right\rfloor-1 + \theta\right)Q\right)\nonumber\\
  &&+F \left(\left(\left\lfloor \frac{f(N-1)}{Q}\right\rfloor + \theta\right)Q\right)\nonumber\\
  &\le& 2F(f(N-1)+1).
\end{eqnarray}
Combining Equations \eqref{eqAlower}, \eqref{eqsumlower}, and \eqref{eqsumdiff}, and noting that $\lambda\left(\prod_{i=1}^s J (j_i,k_i)\right)=\frac{1}{Q}$,  
gives
\begin{eqnarray*}
\frac{1}{N}A\left(\prod_{i=1}^s J (j_i,k_i)\right)- \frac{1}{Q}&\ge& 
\frac{1}{N}\sum_{w=0}^{\lfloor\frac{f(N-1)}{Q}\rfloor-2+\theta} F(wQ)  -\frac{1}{Q}\\
&\ge& \frac{\sum_{w=1}^{\lfloor\frac{f(N-1)}{Q}\rfloor+\theta} F(wQ)- 2F(f(N-1)+1)}{N}  -\frac{1}{Q}\\
&\ge& \frac{- 2F(f(N-1)+1)}{N}+\frac{N-1}{QN}-\frac{1}{Q}\\
&\ge& \frac{- 2F(f(N-1)+1)}{N}-\frac{1}{NQ}.
\end{eqnarray*}
In exactly the same way, using \eqref{eqAupper}, \eqref{eqsumupper}, and \eqref{eqsumdiff}, we get
\[\frac{1}{N}A\left(\prod_{i=1}^s J (j_i,k_i)\right)- \frac{1}{Q}\le \frac{2F(f(N-1)+1)}{N}+\frac{1}{NQ},\]
from which we derive 
\[\abs{\frac{1}{N}A\left(\prod_{i=1}^s J (j_i,k_i)\right)- \frac{1}{Q}}\le \frac{2F(f(N-1)+1)}{N}+\frac{1}{NQ}.\]
Finally, note that, by writing $A(I)$ for the number of points of $(\bsx_{f(n)})_{n=0}^{N-1}$ in $I$, 
\begin{eqnarray*}
\lefteqn{\abs{\frac{A(I)}{N}-\lambda(I)}\le}\\&\le& \sum_{j_1=1}^{m_1}\cdots
\sum_{j_s=1}^{m_s}\sum_{k_1=0}^{\alpha_{j_1}^{(1)}-1}\cdots \sum_{k_s=0}^{\alpha_{j_s}^{(s)}-1}
\abs{\frac{1}{N}A\left(\prod_{i=1}^s J (j_i,k_i)\right)-\lambda\left(\prod_{i=1}^s J (j_i,k_i)\right)}\\
&\le& C \frac{(\log N)^s F(f(N-1)+1)}{N},
\end{eqnarray*}
for a suitably chosen constant $C$, and the result follows.

\item As in Item 2, assume without loss of generality that $f(0)=0$, $f(1)=1$, and that $F(k)$ is monotonically increasing in $k$ for $k\ge 1$. 

Let $p$ be a prime and let $(\bsx_n)_{n\ge0}$ be a digital $(t,s)$-sequence over $\FF_p$. For 
estimating the discrepancy, we consider an arbitrary interval 
$$I:=\prod_{i=1}^s [0,\alpha^{(i)})\subseteq [0,1)^s,$$
for some $\alpha^{(1)},\ldots,\alpha^{(s)}\in (0,1]$. Choose $m$ as the minimal integer such that $N \le p^{m}$. 
By a similar argument as for the case of Halton sequences, we can restrict ourselves to considering only $\alpha^{(i)}$ with 
at most $m$ non-zero digits $\alpha_1^{(i)},\ldots,\alpha_m^{(i)}$ in their base $p$ expansion. Moreover, with the same reasoning as in
the Halton case, we see that we essentially only need to deal with intervals of the form
$$\prod_{i=1}^s J(j_i,k_i):=\prod_{i=1}^s \left[\sum_{r=1}^{j_i -1}\frac{\alpha_r^{(i)}}{p^r}+\frac{k_i}{p^{j_i}}, 
 \sum_{r=1}^{j_i-1}\frac{\alpha_r^{(i)}}{p^r}+\frac{k_i +1}{p^{j_i}}\right),$$
with $1\le j_i\le m$ and $0\le k_i\le \alpha_{j_i}^{(i)}-1$. Again, if $\alpha_{j_i}^{(i)}=0$, 
then $J(j_i,k_i)$ is of zero volume containing no points, so we can 
restrict ourselves to considering only those $J(j_i,k_i)$ with $\alpha_{j_i}^{(i)}\ge 1$.

As for the case of Halton sequences, we would like to derive an upper and a lower bound on the number $A\left(\prod_{i=1}^s J(j_i,k_i)\right)$ of points contained in $\prod_{i=1}^s J(j_i,k_i)$. To this
end, denote the $r$-th row of a generator matrix $C_j$, $1\le j\le s$ of $(\bsx_n)_{n\ge0}$ by $\bsc_r^{(j)}$.

For an integer $v\ge 0$, 
the point $\bsx_v$ is contained in $\prod_{i=1}^s J(j_i,k_i)$ if and only if
\begin{equation}\label{eqlinsystem}
\mathcal{C}\cdot \begin{pmatrix}v_0\\ v_1\\ v_2\\ \vdots \end{pmatrix}=A^{\top},
\end{equation}
where $v_0,v_1, v_2,\ldots $ are the base $p$ digits of $v$, where
\[A:=(\alpha_1^{(1)},\ldots,\alpha_{j_1-1}^{(1)},k_1,
 \alpha_1^{(2)},\ldots,\alpha_{j_2-1}^{(2)},k_2,\ldots\ldots,\alpha_1^{(s)},\ldots,\alpha_{j_s-1}^{(s)},k_s)\in \FF_p^{j_1+\cdots+j_s},\]
and
 \[\mathcal{C}:=\left(\bsc_1^{(1)}, \ldots, \bsc_{j_1}^{(1)}, 
      \bsc_1^{(2)},\ldots, \bsc_{j_2}^{(2)}, 
      \ldots \ldots,
      \bsc_1^{(s)}, \ldots, \bsc_{j_s}^{(s)}\right)^\top \in \FF_p^{(j_1+\cdots+j_s)\times \NN}.\] 
Let now $Q:=p^{j_1+\cdots + j_s +t}$, let $w\in\NN_0$ and consider those $v\ge 0$ with $wQ \le v \le (w+1)Q -1$. For these $v$, the first $j_1+j_2+\cdots + j_s +t$ digits in their 
base $p$ expansion vary, while all the other digits are fixed. Hence we can write \eqref{eqlinsystem} as
\[
D_1\cdot \begin{pmatrix}v_0\\ v_1\\ \vdots\\ v_{j_1+\cdots j_s +t} \end{pmatrix}
+ D_2 \cdot \begin{pmatrix}v_{j_1+\cdots+j_s+t+1} \\ v_{j_1+\cdots+j_s+t+2}\\ \vdots\\ \end{pmatrix} =A^{\top},
\]
where $\mathcal{C}=(D_1|D_2)$ and where $D_1$ is an $(j_1+\cdots +j_s)\times (j_1 +\cdots + j_s +t)$-matrix and $D_2$ is an $(j_1+\cdots + j_s)\times\NN$-matrix over $\FF_p$.

Due to the fact that $(\bsx_n)_{n\ge 0}$ is a digital $(t,s)$-sequence, it follows that $D_1$ has full rank, and hence there are exactly $p^t$ values $v$ in $\{wQ, wQ+1,\ldots, (w+1)Q-1\}$ such
that $\bsx_v$ is contained in $\prod_{i=1}^s J(j_i,k_i)$. 

Now note again that there exists a number $\theta=\theta(Q,f(N-1))\in\{0,1\}$ such that $0=f(0)\le wQ\le f(N-1)$ if and only if
$w\in\{0,\ldots,\lfloor\frac{f(N-1)}{Q}\rfloor-1+\theta\}$. By our observations above, for each of these $w\in\{0,\ldots,\lfloor\frac{f(N-1)}{Q}\rfloor-1+\theta\}$ there exist $p^t$ integers $R_{w,1},\ldots,R_{w,p^t}\in\{0,\ldots,Q-1\}$ such that exactly the points $\bsx_{R_{w,1}+wQ},\ldots,\bsx_{R_{w,p^t}+wQ}$ among $\bsx_{wQ},\bsx_{wQ+1},\ldots,\bsx_{(w+1)Q-1}$ are contained in
$\prod_{i=1}^s J(j_i,k_i)$. Therefore, we can estimate 
\begin{equation}\label{eqAlower2}
A\left(\prod_{i=1}^s J (j_i,k_i)\right)\ge \sum_{w=0}^{\lfloor\frac{f(N-1)}{Q}\rfloor-2+\theta}\sum_{z=1}^{p^t} F(R_{w,z}+wQ)\ge 
p^t\sum_{w=0}^{\lfloor\frac{f(N-1)}{Q}\rfloor-2+\theta} F(wQ),
\end{equation}
and
\begin{equation}\label{eqAupper2}
A\left(\prod_{i=1}^s J (j_i,k_i)\right)\le \sum_{w=0}^{\lfloor\frac{f(N-1)}{Q}\rfloor-1+\theta}\sum_{z=1}^{p^t} F(R_{w,z}+wQ)\le 
p^t\sum_{w=1}^{\lfloor\frac{f(N-1)}{Q}\rfloor+\theta} F(wQ).
\end{equation}
In exactly the same way as for a Halton sequence, we obtain, by noting that $\lambda\left(\prod_{i=1}^s J (j_i,k_i)\right)=\frac1{p^{l_1+\cdots +l_s}}=\frac{p^t}{Q}$, 
\[\abs{\frac{1}{N}A\left(\prod_{i=1}^s J (j_i,k_i)\right)- \frac{1}{Q}}\le \frac{p^t2F(f(N-1)+1)}{N}+\frac{p^t}{NQ},\]
and the result follows.
\end{enumerate}
\end{proof}

Examples of functions $f$ and $F$ satisfying the assumptions of Theorem~\ref{thmsqrt} are obtained as follows. Let $g:\RR_0^{+}\To\RR_0^{+}$
be a function that is twice differentiable on $(0,\infty)$, with $g'(x)>0$ and $g''(x)<0$ for $x\in(0,\infty)$. Moreover, define
$f(n):=\lfloor g(n) \rfloor$ for $n\in\NN$. It then easily follows that $f$ and $F$ indeed fulfill the assumptions of the theorem
and we obtain 
\begin{equation}\label{eqbigF}
 F(k+1)=\left\lceil g^{-1} (k+1) \right\rceil - \left\lceil g^{-1} (k) \right\rceil.
\end{equation}
We thus obtain the following exemplary corollary to Theorem~\ref{thmsqrt}.
\begin{corollary}
Let $\alpha\in (0,1)$.Then the following assertions hold.
\begin{enumerate}
\item For a Halton-sequence $(\bsx_n)_{n\ge 0}$ in co-prime bases $b_1,\ldots,b_s$,
 $$\overline{C}_1\frac{1}{N^{\alpha}}\le D_N((\bsx_{\lfloor n^{\alpha}\rfloor})_{n\ge 0})\le \overline{C}_2 \frac{(\log N)^s}{N^{\alpha}},$$
where $\overline{C}_1$, $\overline{C}_2$ are constants that depend on the sequence and on $\alpha$, but are independent of $N$.

\item For a digital $(t,s)$-sequence $(\bsx_n)_{n\ge 0}$ over $\ZZ_p$ for prime $p$, 
 $$\overline{\overline{C}}_1\frac{1}{N^{\alpha}}\le D_N((\bsx_{\lfloor n^{\alpha}\rfloor})_{n\ge 0})\le \overline{\overline{C}}_2 \frac{(\log N)^s}{N^{\alpha}},$$
where $\overline{\overline{C}}_1$, $\overline{\overline{C}}_2$ are constants that depend on the sequence and on $\alpha$, but are independent of $N$.
\end{enumerate}
\end{corollary}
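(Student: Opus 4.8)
The plan is to apply Theorem~\ref{thmsqrt} with the concrete choice $g(x)=x^{\alpha}$. This function is twice differentiable on $(0,\infty)$ with $g'(x)=\alpha x^{\alpha-1}>0$ and $g''(x)=\alpha(\alpha-1)x^{\alpha-2}<0$ for $\alpha\in(0,1)$, so by the discussion immediately preceding the corollary the index function $f(n)=\lfloor n^{\alpha}\rfloor$ together with its counting function $F(k)=\#\{n\in\NN_0:f(n)=k\}$ fulfils all hypotheses of Theorem~\ref{thmsqrt}. Since $g^{-1}(y)=y^{1/\alpha}$, formula \eqref{eqbigF} becomes
\[
F(k+1)=\left\lceil (k+1)^{1/\alpha}\right\rceil-\left\lceil k^{1/\alpha}\right\rceil .
\]

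The next step is to determine the order of growth of $F$. Dropping the two ceiling signs changes the right-hand side by at most $2$, and the mean value theorem gives $(k+1)^{1/\alpha}-k^{1/\alpha}=\tfrac{1}{\alpha}\xi^{1/\alpha-1}$ for some $\xi\in(k,k+1)$; since $1/\alpha-1>0$ this quantity lies between $\tfrac{1}{\alpha}k^{1/\alpha-1}$ and $\tfrac{1}{\alpha}(k+1)^{1/\alpha-1}$. Consequently there are constants $0<c_{\alpha}\le C_{\alpha}$ and an index $k_{0}$, all depending only on $\alpha$, such that $c_{\alpha}k^{1/\alpha-1}\le F(k)\le C_{\alpha}k^{1/\alpha-1}$ for every $k\ge k_{0}$; the lower bound is legitimate precisely because the main term $k^{1/\alpha-1}\to\infty$ dominates the $O(1)$ error coming from the ceilings.

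It then remains to feed these bounds into Theorem~\ref{thmsqrt}. For all sufficiently large $N$ one has $\tfrac{1}{2}N^{\alpha}\le f(N)-1$ and $f(N-1)+1\le 2N^{\alpha}$, hence $F(f(N)-1)\ge c_{\alpha}2^{1-1/\alpha}N^{1-\alpha}$ and $F(f(N-1)+1)\le C_{\alpha}2^{1/\alpha-1}N^{1-\alpha}$. Part~1 of Theorem~\ref{thmsqrt} then yields
\[
D_N\big((\bsx_{\lfloor n^{\alpha}\rfloor})_{n\ge 0}\big)\ge\frac{F(f(N)-1)}{N}\ge\overline{C}_{1}\,N^{-\alpha},
\]
while Part~2 for a Halton-sequence and Part~3 for a digital $(t,s)$-sequence give
\[
D_N\big((\bsx_{\lfloor n^{\alpha}\rfloor})_{n\ge 0}\big)\le\frac{2C\,F(f(N-1)+1)\,(\log N)^{s}}{N}\le\overline{C}_{2}\,\frac{(\log N)^{s}}{N^{\alpha}},
\]
with $\widetilde{C}p^{t}$ in place of $C$ (and the labels $\overline{\overline{C}}_{1},\overline{\overline{C}}_{2}$) in the digital case; this establishes both parts of the corollary for large $N$.

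The only points requiring any care are the treatment of the ceiling functions in \eqref{eqbigF} — which contribute only a bounded error, negligible against the polynomially growing main term — and the bookkeeping of the finitely many small values of $N$, in particular those for which $f(N)$ still equals $A$ or for which the initial-segment normalisation in the proof of Theorem~\ref{thmsqrt} is invoked; these affect only the values of the constants $\overline{C}_{1},\overline{C}_{2},\overline{\overline{C}}_{1},\overline{\overline{C}}_{2}$. No genuinely hard step arises once Theorem~\ref{thmsqrt} is available, so I would expect the write-up to be short.
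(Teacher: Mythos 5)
Your proposal is correct and takes essentially the same route as the paper: the paper's proof is a one-line remark combining Theorem~\ref{thmsqrt} (cited there, by an apparent typo, as Theorem~\ref{thmsodgen}) with the observation that $c'_{\alpha}k^{\frac{1}{\alpha}-1}\le F(k)\le c_{\alpha}k^{\frac{1}{\alpha}-1}$, and you merely spell out the derivation of this estimate from \eqref{eqbigF} and its insertion into the three parts of the theorem. No further comparison is needed.
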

\begin{proof}
The result follows by combining Theorem~\ref{thmsodgen} with the observation that $$c'_{\alpha}k^{\frac{1}{\alpha}-1} \le F(k)\le c_{\alpha} k^{\frac{1}{\alpha}-1},$$ with constants $c'_{\alpha},c_{\alpha}>0$ that depend on $\alpha$, but not on $k$.
\end{proof}

\section{Appendix: Uniform discrepancy}\label{appendix}

In Corollary~\ref{cor1} we implicitly used the fact that $(t,s)$-sequences in base $b$ 
as well as Halton-sequences in pairwise co-prime bases $b_1,\ldots,b_s$ have uniform discrepancy of order $(\log N)^s/N$. 
Since we are not aware of a proof of these facts in the existing literature, we provide one here.

\subsection{Uniform discrepancy of $(t,s)$-sequences in base $b$}

Assume that $\Delta_b(t,m,s)$ is a number for which $$b^m D_{b^m}(\cP) \le \Delta_b(t,m,s)$$ holds for the discrepancy of any $(t,m,s)$-net $\cP$ in base $b$.

\begin{theorem}\label{thmA1}
Let $(\bsx_n)_{n \ge 0}$ be a $(t,s)$-sequence in base $b$. Then we have 
$$N \widetilde{D}_N((\bsx_n)_{n \ge 0}) \le (2b-1)\left(t b^t + \sum_{m=t}^{\lfloor \log_b N\rfloor} \Delta_b(t,m,s)\right).$$ 
\end{theorem}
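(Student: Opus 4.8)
The plan is to bound $D_N((\bsx_{n+k})_{n\ge 0})$ uniformly in $k$ by decomposing the block of $N$ consecutive indices starting at $k$ into dyadic-type pieces adapted to the base-$b$ net structure of the $(t,s)$-sequence. First I would write $k$ and $k+N$ in base $b$ and split the interval of integers $[k,k+N)$ into a bounded number (at most $b-1$ for each relevant digit position, hence at most $(2b-1)$ groups overall after accounting for the ``carry'' structure at both ends) of elementary blocks of the form $[ab^m,(a+1)b^m)$ with $m$ ranging over $t,t+1,\ldots,\lfloor\log_b N\rfloor$, together with possibly one leftover block of fewer than $b^t$ points. This is the standard van der Corput--type digit decomposition of an interval of consecutive integers; the point is that within $[k,k+N)$ one can peel off blocks whose lengths are powers of $b$, and the number of blocks of each length $b^m$ is at most $b-1$ from the ``low'' side and at most $b-1$ from the ``high'' side.

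Next I would use the defining property of a $(t,s)$-sequence: for each $m\ge t$ and each $a\ge 0$, the finite sequence $(\bsx_n)_{n=ab^m}^{(a+1)b^m-1}$ is a $(t,m,s)$-net in base $b$, so its discrepancy satisfies $b^m D_{b^m}\le \Delta_b(t,m,s)$ by the definition of $\Delta_b(t,m,s)$. For the exceptional short block of length $<b^t$ I would bound its contribution trivially by $b^t$, or more carefully by $t b^t$ to absorb the several digit positions $m<t$ that cannot be grouped into nets; in fact the cleanest bookkeeping is to note that all indices $n$ with the low-order base-$b$ digits below position $t$ varying contribute at most $b^t$ points, and there are at most $t$ such ``sub-threshold'' digit positions on each side, giving the $tb^t$ term. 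Then I would invoke the triangle inequality for discrepancy (\cite[p.~115, Theorem~2.6]{kuinie}), exactly as in the proof of Theorem~\ref{thmgeneral}: if a point set of $N$ points is the disjoint union of point sets of sizes $M_1,\ldots,M_r$, then $N D_N\le \sum_i M_i D_{M_i}$. Summing $\Delta_b(t,m,s)$ over $m=t,\ldots,\lfloor\log_b N\rfloor$, each value of $m$ occurring at most $2b-1$ times (or $b-1$ times from each side plus an adjustment), and adding the $(2b-1)\,t b^t$ contribution from the short blocks yields precisely
$$N\widetilde D_N((\bsx_n)_{n\ge 0})\le (2b-1)\Bigl(tb^t+\sum_{m=t}^{\lfloor\log_b N\rfloor}\Delta_b(t,m,s)\Bigr).$$

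The main obstacle I anticipate is the careful bookkeeping in the digit decomposition when the starting index $k$ is arbitrary rather than a multiple of a power of $b$: one must handle the ``carry'' when passing from $k$ to $k+N$, which is why both a low-order expansion of the left endpoint and a high-order expansion of the right endpoint are needed, and this is the source of the factor $2b-1$ rather than $b-1$ (and of the extra $tb^t$ term from digit positions below $t$, which do not correspond to honest nets). Verifying that no value of $m$ is used more than $2b-1$ times, and that the leftover pieces really do have fewer than $b^t$ points each (so that the trivial bound $D_{M}\le 1$ gives a contribution bounded by $b^t$ per piece), is the one place where some genuine care — as opposed to routine estimation — is required. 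Everything else is an application of the net property and two applications of the triangle inequality for discrepancy, in the same spirit as the proof of Theorem~\ref{thmgeneral} above.
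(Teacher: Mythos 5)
Your proposal is correct and follows essentially the same route as the paper: decompose the index block $[k,k+N)$ into base-$b$ aligned segments of lengths $b^m$ (at most roughly $2b-2$ per digit position, coming from both ends), bound each segment with $m\ge t$ by $\Delta_b(t,m,s)$ via the $(t,m,s)$-net property, treat the positions $m<t$ trivially (giving the $tb^t$ term), and combine with the triangle inequality for discrepancy. The paper merely makes your anticipated bookkeeping explicit by writing $k=\ell b^r-(d_{r-1}b^{r-1}+\cdots+d_0)-1=(\ell-1)b^r+\kappa_{r-1}b^{r-1}+\cdots+\kappa_0$ with $d_j+\kappa_j=b-1$, so that each digit position contributes at most $d_m+a_m+\kappa_m\le 2b-2$ blocks, exactly as you predicted.
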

\begin{proof}
Let $k \in \NN_0$. We show that 
$$N D_N((\bsx_{n+k})_{n \ge 0})  \le (2b-1)\left(t b^t+\sum_{m=t}^{\lfloor \log_b N\rfloor} \Delta_b(t,m,s)\right)$$ 
uniformly in $k \in \NN_0$.

For $N< b^t$, the assertion follows trivially by $N D_N((\bsx_{n+k})_{n \ge 0})  \le N$.

Let now $N \in \NN$, $N\ge b^t$  with $b$-adic expansion $N=a_rb^r+a_{r-1} b^{r-1}+\cdots+a_1 b+a_0$ where $a_j \in \{0,\ldots,b-1\}$ 
for $0 \le j \le r$ and $a_r \not=0$ (note that $r\ge t$). For given $k\in\NN_0$, choose $\ell \in \NN$ such that $(\ell-1) b^r \le k < \ell b^r$. 
Then we can write 
$$k=\ell b^r-(d_{r-1} b^{r-1}+\cdots+d_1 b+d_0)-1$$ 
with some $d_j \in \{0,\ldots,b-1\}$ for $0 \le j \le r-1$, and
$$k=(\ell-1) b^r+\kappa_{r-1}b^{r-1}+\cdots +\kappa_1 b+\kappa_0$$ 
with some $\kappa_j \in \{0,\ldots,b-1\}$ for $0 \le j \le r-1$. Note that therefore $d_j+\kappa_j=(b-1)$ for $0 \le j < r$.

We split up the point set $\cP_{k,N}:=\{\bsx_n\, : \, k \le n \le k+N-1\}$ in the following way:
\begin{eqnarray*}
\cP_{k,N} &=&\bigcup_{1 \le d \le d_0+1} \cP'_{0,d} \cup \bigcup_{1 \le m \le t-1\atop 1 \le d \le d_m}\cP'_{m,d} \cup \bigcup_{t \le m \le r-1\atop 1 \le d \le d_m}\cP'_{m,d}\\
&& \cup \bigcup_{0 \le a \le a_r -2} \cP''_a \cup \bigcup_{0 \le m \le t-1 \atop 0 \le x \le a_m+\kappa_m -1} \cP'''_{m,x} 
\cup \bigcup_{t \le m \le r-1 \atop 0 \le x \le a_m+\kappa_m -1} \cP'''_{m,x},
\end{eqnarray*}
where 
\begin{eqnarray*}
\cP'_{m,d} & := & \{\bsx_{\ell b^r-d_{r-1}b^{r-1}-\cdots-d_{m+1} b^{m+1}-d b^m+j} \, : \, 0 \le j < b^m\},\\
\cP''_a & := &  \{\bsx_{\ell b^r+a b^r+j}\, : \, 0 \le j < b^r\},\\
\cP'''_{m,x} & := & \{\bsx_{(\ell+a_r-1)b^r+(\kappa_{r-1}+a_{r-1}) b^{r-1}+\cdots +(\kappa_{m+1}+a_{m+1}) b^{m+1}+x b^m+j}\; : \; 0 \le j < b^m\}.
\end{eqnarray*}
For $m\le t-1$, we can bound the discrepancy of $\cP'_{m,d}$ and $\cP'''_{m,x}$, respectively, by the trivial bound 1. For $m\ge t$,
the point sets $\cP'_{m,d}$ and $\cP'''_{m,x}$ are $(t,m,s)$-nets in base $b$, and the $\cP''_a$ are $(t,r,s)$-nets in base $b$. 
From the triangle inequality for the discrepancy we obtain 
\begin{eqnarray*}
N D_N(\cP_{k,N}) & \le &  (d_0+a_0+\kappa_0+1) b^0 +  \sum_{m=1}^{t-1} (d_m+a_m+\kappa_m) b^m\\&& + \sum_{m=t}^{r-1} (d_m+a_m+\kappa_m) \Delta_b(t,m,s)
+ \max(a_r-2,0) \Delta_b(t,r,s)\\
& \le & (2 b-1)  + (2 b-2)\left((t-1)b^t+ \sum_{m=t}^{r-1} \Delta_b(t,m,s)\right)\\ 
&&+ \max(b-3,0) \Delta_b(t,r,s)\\
& \le & (2b-1)\left(tb^t+ \sum_{m=t}^r \Delta_b(t,m,s)\right)
\end{eqnarray*} 
and the result follows, since $r=\lfloor \log_b N\rfloor$.  
\end{proof}

\begin{corollary}
Let $(\bsx_n)_{n \ge 0}$ be a $(t,s)$-sequence in base $b$. Then we have $$N \widetilde{D}_N((\bsx_n)_{n \ge 0}) \ll_{s,b} b^t (\log N)^s.$$ 
\end{corollary}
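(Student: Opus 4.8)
The plan is to obtain the corollary as an immediate consequence of Theorem~\ref{thmA1}, by feeding in the classical discrepancy bound for $(t,m,s)$-nets and then performing an elementary summation. By Theorem~\ref{thmA1} we have
\[
N\widetilde{D}_N((\bsx_n)_{n\ge 0})\le (2b-1)\left(tb^t+\sum_{m=t}^{\lfloor\log_b N\rfloor}\Delta_b(t,m,s)\right),
\]
so the whole task reduces to exhibiting an admissible choice of $\Delta_b(t,m,s)$ and estimating the sum on the right-hand side.

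First I would recall the standard estimate for the (extreme) discrepancy of $(t,m,s)$-nets in base $b$ due to Niederreiter (see, e.g., \cite{DP10,N87,niesiam}): there is a constant $c_{s,b}>0$, depending only on $s$ and $b$, such that every $(t,m,s)$-net $\cP$ in base $b$ satisfies $b^m D_{b^m}(\cP)\le c_{s,b}\,b^t (m+1)^{s-1}$ for all $m\ge t$. Hence we may take $\Delta_b(t,m,s)=c_{s,b}\,b^t(m+1)^{s-1}$. (If only the star-discrepancy version of this bound is at hand, it suffices to invoke $D_N\le 2^s D_N^{\ast}$, which changes $c_{s,b}$ by the harmless factor $2^s$.) Then I would carry out the summation: writing $M:=\lfloor\log_b N\rfloor$,
\[
\sum_{m=t}^{M}\Delta_b(t,m,s)\le c_{s,b}\,b^t\sum_{m=0}^{M}(m+1)^{s-1}\le c_{s,b}\,b^t (M+1)^s\ll_{s,b} b^t(\log N)^s,
\]
where the last step uses $M+1\le 2\log_b N$ for $N$ large. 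The leftover term $tb^t$ is of strictly smaller order: for all sufficiently large $N$ we have $N\ge b^t$, hence $t\le\lfloor\log_b N\rfloor\le(\log N)^s$, so that $tb^t\ll b^t(\log N)^s$. Substituting both estimates into the inequality of Theorem~\ref{thmA1} yields $N\widetilde{D}_N((\bsx_n)_{n\ge 0})\ll_{s,b}b^t(\log N)^s$, as claimed.

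I do not expect any genuine obstacle here; the computation is pure bookkeeping once Theorem~\ref{thmA1} is in place. The only point that deserves a moment's care is the very first step, namely quoting a net-discrepancy bound in exactly the shape we need — with the factor $b^t$ pulled out and all remaining dependence on $s$ and $b$ only — so that after summation the parameter $t$ enters solely through the advertised $b^t$ (together with the lower-order $tb^t$, which is absorbed as above).
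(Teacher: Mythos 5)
Your proof is correct and takes essentially the same route as the paper: invoke Theorem~\ref{thmA1} together with the classical net-discrepancy bound $\Delta_b(t,m,s)\ll_{s,b} b^t m^{s-1}$ for $m\ge t$, and sum over $m\le \lfloor\log_b N\rfloor$. The paper states this in one line; your write-up only makes the summation and the absorption of the lower-order term $tb^t$ explicit.
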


\begin{proof}
The result follows from Theorem~\ref{thmA1} together with the fact that $$\Delta_b(t,m,s) \ll_{s,b} b^t m^{s-1}$$ for $m \ge t$ (see, for example, \cite{DP10,niesiam}).
\end{proof}

\subsection{Uniform discrepancy of Halton-sequences}

\begin{theorem}
Let $(\bsx)_{n \ge 0}$ be a Halton-sequence in pairwise co-prime bases $b_1,\ldots,b_s$. 
Then we have 
$$N \widetilde{D}_N((\bsx_n)_{n \ge 0})= \frac{1}{s!} \prod_{j=1}^s\left(\frac{\lfloor b_j/2\rfloor \log N}{\log b_j}+s\right)+O((\log N)^{s-1}),$$
where the implied constant depends on $b_1,\ldots,b_s$ and $s$. 
\end{theorem}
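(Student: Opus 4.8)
The plan is to pin down the constant by matching a lower and an upper bound, both obtained from a combinatorial count over $b_i$-adic sub-boxes. For $k\in\NN_0$ let $\Delta_k(\bsx):=\#\{n:k\le n<k+N,\ \bsx_n\in[\bszero,\bsx)\}-N\prod_{i=1}^s x^{(i)}$ denote the signed local discrepancy of the shifted block; then $D_N((\bsx_{n+k})_{n\ge 0})$ is, up to the usual inclusion--exclusion over the $2^s$ corners of a box, the oscillation of $\Delta_k$, and $N\widetilde{D}_N$ is its supremum over $k$. A first reduction shows that it suffices to let the coordinates of $\bsx$ be $b_i$-adic rationals with at most $\lceil\log_{b_i}N\rceil+O(1)$ digits: once a box is split into $b_i$-adic pieces, rounding a coordinate to that precision changes every count by $O(1)$, because for $Q=\prod_i b_i^{m_i}>N$ at most one of the $N$ points lies in any fixed residue class modulo $Q$, so deep sub-boxes are almost always empty and the fine-scale contributions stay bounded (this is also why the relevant ``level-vectors'' below effectively live on a simplex).

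On such a box, and exactly as in the proof of Theorem~\ref{thmsqrt}(2) --- here the coprimality of $b_1,\dots,b_s$ is essential --- a product of $b_i$-adic intervals of levels $r_1,\dots,r_s$ contains $\bsx_n$ iff $n$ lies in one prescribed residue class modulo $Q=\prod_i b_i^{r_i}$, so the count of such $n$ in $[k,k+N)$ is $\lfloor N/Q\rfloor$ or $\lceil N/Q\rceil$, i.e.\ it deviates from $N/Q$ by $e-\{N/Q\}$ with $e\in\{0,1\}$ according to whether that residue class meets the tail window $\{k,\dots,k+(N\bmod Q)-1\}\bmod Q$. Summing these deviations over all $b_i$-adic sub-boxes of the chosen box expresses $\Delta_k(\bsx)$ as a sum over level-vectors $(r_1,\dots,r_s)$; within coordinate $i$ the relevant residue classes form an arithmetic progression modulo $b_i^{r_i}$ whose position we may place freely via the higher digits of $x^{(i)}$. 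Optimising, coordinate by coordinate, the box digit $a_i\in\{0,\dots,b_i-1\}$ against the (shift-determined) tail window, one finds that the positive and the negative part of the contribution of a fixed level-vector to the oscillation together amount to at most $\prod_{i=1}^s\lfloor b_i/2\rfloor$ --- the elementary input being $\max_{0\le a\le b}\min(a,b-a)=\lfloor b/2\rfloor$ --- and that only level-vectors with $Q$ of order at most $N$, that is $\sum_i r_i\log b_i\le\log N+O(1)$, contribute at main order. Since the number of such level-vectors equals $\frac1{s!}\prod_{i=1}^s\frac{\log N}{\log b_i}+O((\log N)^{s-1})$ (lattice points in the simplex $\{r_i\ge0:\sum_i r_i\log b_i\le\log N\}$), summing gives $N\widetilde{D}_N\le\frac1{s!}\prod_{i=1}^s\bigl(\tfrac{\lfloor b_i/2\rfloor\log N}{\log b_i}+s\bigr)+O((\log N)^{s-1})$.

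For the matching lower bound I would reverse the construction: choose a shift $k$ whose tail windows are simultaneously well placed at all $\Theta(\log N)$ relevant levels --- first fixing $k$ modulo a suitably large power of $\prod_i b_i$, then checking that the coarser tail windows are ``good'' up to $O(1)$ exceptional levels, which are absorbed into the error --- and choose a box whose digits realise $\min(a_i,b_i-a_i)=\lfloor b_i/2\rfloor$ at every coordinate and level; the contributions of the $\tfrac1{s!}\prod_i\log_{b_i}N$ admissible level-vectors then add up essentially without cancellation, and a second box obtained by complementing the corner choices supplies the other half of the oscillation.

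The step I expect to be hardest is making these two optimisations exact and consistent with a \emph{single} shift. On the upper-bound side the tail windows at the different levels are not free parameters --- each is $k$ read modulo the corresponding modulus --- so one must show no box can beat $\prod_i\lfloor b_i/2\rfloor$ per level-vector simultaneously across all scales for one $k$; on the lower-bound side one must actually exhibit such a $k$ that is near-optimal at $\Theta(\log N)$ scales at once, despite the coupling. Controlling this inter-level interaction, together with the digit-rounding, the genuine boundary effects, and the case where $k$ is far larger than $N$, inside the claimed $O((\log N)^{s-1})$ is the delicate part; the one-dimensional (van der Corput) case already isolates the essential difficulty, while the factor $1/s!$ in $s$ variables is exactly the simplex lattice-point count above.
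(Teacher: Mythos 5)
Your plan is, in substance, an attempt to re-derive Atanassov's bound for Halton sequences from scratch, whereas the paper's proof is a two-line adaptation: it invokes \cite[Theorem~3.36]{DP10} and only remarks that the counting lemma \cite[Lemma~3.37]{DP10} (a product of elementary $b_i$-adic intervals corresponds, by the Chinese Remainder Theorem, to a single residue class modulo $Q=\prod_i b_i^{r_i}$, so the number of indices in \emph{any} window $k\le n<k+N$ hitting it deviates from $N/Q$ by at most $1$, uniformly in $k$) holds for shifted windows, after which the cited proof goes through verbatim for $\widetilde{D}_N$. Your skeleton contains the right elementary ingredients (the residue-class count, $\min(a,b-a)\le\lfloor b/2\rfloor$, the simplex lattice-point count giving the factor $\frac{1}{s!}\prod_i\log_{b_i}N$), but the step you dismiss in one clause is exactly the theorem's substance: the restriction to level vectors with $\sum_i r_i\log b_i\le\log N$. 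After rounding each coordinate to $m_i\approx\log_{b_i}N$ digits, a plain digit-by-digit decomposition of a box still produces $\Theta\bigl((\log N)^s\bigr)$ elementary boxes whose level vectors lie \emph{outside} the simplex (with $Q$ up to roughly $N^s$); each of these carries a counting error of up to $1$, and ``deep sub-boxes are almost always empty'' only bounds their total volume, not their total point count, which is itself a discrepancy-type quantity. Without Atanassov's signed inner/outer approximation of $[\bszero,\bsx)$ by boxes whose levels all lie in the simplex (with the defect controlled, by an induction on the dimension, within $O((\log N)^{s-1})$ — the content of the lemmas behind \cite[Theorem~3.36]{DP10}), your argument only yields a bound of order $(\log N)^s$ with neither the constant $\prod_i\lfloor b_i/2\rfloor$ nor the factor $1/s!$. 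Relatedly, the difficulty you flag — coupling of the ``tail windows'' at different levels through a single shift $k$ — is not where the problem lies: in the actual proof the choice $\min(a_i,b_i-a_i)$ is made on the box side (add $a_i$ boxes or subtract $b_i-a_i$ from the next coarser one), independently of $k$, and the shift enters only through the counting lemma, whose error is at most $1$ uniformly in $k$; that uniformity is precisely the paper's sole new remark.

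The second half of your plan, a matching lower bound with the same constant, aims at a statement that is false as an equality for all $N$, and is not what the paper proves: since the cited result is an upper bound, the ``$=$'' in the statement can only be read as ``$\le$''. Concretely, take $s=1$, $b=2$, $N=2^m$: any $2^m$ consecutive indices form a complete residue system modulo $2^m$, so the corresponding van der Corput points contain exactly one point in each dyadic interval of length $2^{-m}$, whence $N D_N((x_{n+k})_{n\ge0})\le 2$ for every $k$, i.e.\ $N\widetilde{D}_N=O(1)$, while the right-hand side's main term is $m+1$. So no choice of shift and box can realize the claimed constant at such $N$, and your proposed construction (a single $k$ near-optimal at $\Theta(\log N)$ scales simultaneously) cannot close this; at best one could hope for lower bounds along special subsequences of $N$, which is a different (and unproved) statement. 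To match the paper you should drop the lower-bound half entirely and supply, or cite, Atanassov's splitting lemma together with the shift-uniform version of the counting lemma; that combination is the whole proof.
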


\begin{proof}
The result follows from an adaption of the proof of \cite[Theorem~3.36]{DP10}. 
Note that \cite[Lemma~3.37]{DP10} also holds true for 
$A(J,k,N,\mathcal{S}):=\#\{n \in \NN\, : \, k \le n < k+N \mbox{ and } \bsx_n \in J\}$ instead of $A(J,N,\mathcal{S}):=A(J,0,N,\mathcal{S})$.
The rest of the proof of \cite[Theorem~3.36]{DP10} remains unchanged. 
\end{proof}

\section*{Acknowledgements}

The authors would like to thank M.~Drmota for valuable suggestions and remarks.

\begin{small}
\noindent\textbf{Authors' address:}\\
\noindent Peter Kritzer, Gerhard Larcher, Friedrich Pillichshammer\\
Institut f\"{u}r Finanzmathematik, Johannes Kepler Universit\"{a}t Linz, Altenbergerstr.~69, 4040 Linz, Austria\\
E-mail: \texttt{peter.kritzer@jku.at},\\ \texttt{gerhard.larcher@jku.at},\\ 
\texttt{friedrich.pillichshammer@jku.at}
\end{small}

\end{document}